\newcommand{\hide}[1]{}
\theoremstyle{plain}
\newtheorem{thm}{Theorem}[section]
\newtheorem{prop}[thm]{Proposition}
\newtheorem{claim}[thm]{Claim}
\newtheorem{cor}[thm]{Corollary}
\newtheorem{lem}[thm]{Lemma}
\newtheorem*{Cartan-Dieudonne}{Cartan-Dieudonne theorem {\rm (\cite [Chapter I, Theorem 7.1]{Lam})}}
\theoremstyle{definition}
\newtheorem{rem}[thm]{Remark}
\theoremstyle{remark}
\newcommand{\HH}{{\mathbb H}}
\newcommand{\CC}{{\mathbb C}}
\newcommand{\QQ}{{\mathbb Q}}
\newcommand{\RR}{{\mathbb R}}
\newcommand{\PP}{{\mathbb P}}
\newcommand{\rank}{{\rm rank}}
\title{A generalization of twistor lines  for complex tori}
\author{Nikolay Buskin}
\address{Department of Mathematics, University of California San Diego, 9500 Gilman Drive \# 0112, La Jolla, CA 92093-0112, USA}
\email{nvbuskin@gmail.com}
\begin{document}
\begin{abstract}
In this work we generalize the classical notion of a twistor line
in the period domain of compact complex tori studied in
\cite{Twistor-lines}. We  introduce two new types of lines, 
which are non-compact analytic curves in the period domain.
We  study the analytic properties of the compactifications of the curves,
preservation of  cohomology classes of type (1,1) along them and prove the twistor path connectivity
of the period domain by the curves of one of the new types.
\end{abstract}
\setcounter{tocdepth}{1}
\sloppy
\maketitle
\tableofcontents

\section{Introduction}
We call a manifold $M$ of a real dimension $4m$ a {\it hypercomplex} manifold, if  there exist 
(integrable) complex structures
$I,J,K$ on $M$ satisfying the quaternionic relations $$I^2=J^2=K^2=-Id, IJ=-JI=K.$$
The ordered triple $(I,J,K)$ is called a {\it hypercomplex structure} on $M$.

A Riemannian $4m$-manifold $M$ with a metric $g$ is called {\it hyperk\"ahler} with 
respect to $g$ (see \cite[p. 548]{Hitchin}), 
if there exist complex structures $I$, $J$ and $K$ on $M$, such that 
$I,J,K$ are covariantly constant and 
are isometries of the tangent bundle $TM$ with respect to $g$, satisfying
the above quaternionic relations.
We call the  ordered triple $(I,J,K)$ of such complex structures {\it a hyperk\"ahler
structure on $M$ compatible with $g$}. 

Every hyperk\"ahler manifold $M$ naturally carries the  underlying hypercomplex structure
and is thus hypercomplex.
A hypercomplex 
 structure $(I,J,K)$ gives rise to a sphere $S^2$ of complex structures on $M$:
$$S^2=\{aI+bJ+cK| a^2+b^2+c^2=1\}.$$


The well known examples of compact hyperk\"ahler manifolds are compact complex tori and irreducible holomorphic symplectic manifolds (IHS manifolds).
We recall that an IHS manifold is a simply connected compact K\"ahler manifold $M$  with $H^0(M,{\Omega}_M^2)$ generated by
an everywhere non-degenerate holomorphic 2-form $\sigma$.

It is known that in the period domain of 
an IHS manifold any two periods can be connected
by a path of twistor lines, see Verbitsky, \cite{Verb-Torelli}, or its short exposition in \cite{Bourbaki}. 
The twistor path connectivity of each of the two connected components of the period domain
of complex tori was proved in \cite{Twistor-lines}. 

In the present paper we generalize the notion of twistor lines to include certain 
non-compact analytic curves in the period domain of complex tori and study the geometry of such curves,
in particular, the behavior at infinity, the path connectivity problem and the preservation of K\"ahler classes
along the curves.

Let us recall the construction of this period domain.
Let   $V_\RR$ be a real vector space of real dimension
$4n$.  A compact complex torus of complex dimension $2n$
considered as a real smooth manifold is the  quotient $A=V_\RR/\Gamma$ of $V_\RR$ by a lattice $\Gamma$
with the complex structure  given by an imaginary endomorphism $I\colon V_\RR \rightarrow V_\RR, I^2=-Id$.
Following \cite{Twistor-lines} we denote the period domain of compact complex tori of complex dimension $2n$ by $Compl$. It is the  set of imaginary endomorphisms of $V_\RR$ and is diffeomorphic to the orbit $^GI$, where 
$G=GL(V_\RR)=GL(4n,\RR)$ acts via the adjoint action, $^gI=g\cdot I:=gIg^{-1}$. This action 
naturally (pointwise) extends
to the action on the set of twistor lines $S\subset Compl$.
The period domain $Compl$ consists
of two connected components, corresponding, non-canonically, to the two connected components of
$G$. We have the embedding of $Compl$ into the Grassmanian $Gr(2n,V_\CC)$ of $2n$-dimensional
complex subspaces in $V_\CC=V_\RR\otimes \CC$ given by $Compl \ni I \mapsto (Id-iI)V_\RR \in Gr(2n,V_\CC)$, which maps $Compl$ biholomorphically onto an open subset of $Gr(2n,V_\CC)$. 
The
complement of this open set is the real-analytic locus $\mathcal L_\RR =\{U \in Gr(2n,V_\CC)|\, U\cap V_\RR \neq \{0\}\}$
of $2n$-dimensional complex subspaces in $V_\CC$ having nontrivial intersection with $V_\RR$.
This locus $\mathcal L_\RR$ is of real codimension 1 in $Gr(2n,V_\CC)$ and it cuts $Gr(2n,V_\CC)$ into two
pieces each of which is the corresponding component of $Compl$. 


Introduce the following  4-dimensional real algebras,
$$\HH(\varepsilon)=\langle i,j\,|\, i^2=-1, j^2=\varepsilon, ij+ji=0\rangle, \varepsilon=-1,0,1.$$
The above introduced twistor spheres arise from embeddings of the algebra of quaternions
$\HH=\HH(-1) \hookrightarrow End\,V_\RR$. The non-compact analogs arise
from the embeddings $\HH(\varepsilon) \hookrightarrow End\,V_\RR$ for $\varepsilon=0,1$.
In the next subsection we explain why it is natural to consider these algebras along with 
$\HH$.


\subsection{Algebraic characterization of twistor lines in $Compl$}
\label{Algebraic-section}
In this subsection we give an algebraic characterization of the above introduced (compact) twistor lines, which is then
generalized in order to define the non-compact analogs of twistor lines.

Let $I,J,K=IJ \in End\, V_\RR$ be complex structures satisfying the quaternionic identities
and $$S=S(I,J)=\{aI+bJ+cK|a^2+b^2+c^2=1\}$$ be the corresponding twistor sphere. 
The basis $I,J,K$ of the space $\RR^3=\langle I,J,K\rangle \subset End\,V_\RR$ is orthonormal with respect to
the bilinear form $(u,v)=-\frac{1}{4n}Tr(uv),$ which is positively definite on $\langle I,J,K \rangle$. 
 Moreover, 
$u,v \in \langle I,J,K\rangle$ anticommute if and only if $u\perp v$.
The sphere $S$ is the sphere of radius 1 in $\RR^3=\langle I,J,K\rangle$ centered at the origin.
Let $J_1\neq \pm J_2$ be some complex structures in $S$. Then the plane 
$\langle J_1,J_2\rangle_\RR \subset End\,V_\RR$ intersects $S$ in the
circle $S \cap \langle J_1,J_2\rangle_\RR$, which contains a complex structure that anticommutes with
$J_1$ (this can be seen by the means of the orthogonalization process applied to the pair $J_1,J_2$).
Thus there exists a real number $\alpha$ such that $\alpha J_1+J_2$ determines,
after a scalar normalization, a complex structure in $S$  anticommuting with $J_1$,
that is (even without the normalization), $$(\alpha J_1+J_2)J_1+J_1(\alpha J_1+J_2)=0,$$
which results in the relation 
$$
J_1J_2+J_2J_1=2\alpha\cdot Id.
$$
The fact that $\alpha J_1+J_2$ is proportional to a complex structure
brings the following restriction on $\alpha$, 
\begin{equation}
\label{Eqn-imaginary}
(\alpha J_1+J_2)^2=a\cdot Id, a<0,
\end{equation}
so that $J=\frac{\alpha J_1+J_2}{\sqrt{-a}}$ is a complex structure and $J$ anticommutes with $I=J_1$.
The left side $(\alpha J_1+J_2)^2$ of Equation (\ref{Eqn-imaginary}) is equal to $$-(1+\alpha^2)Id+\alpha\cdot (J_1J_2+J_2J_1)=(\alpha^2-1)Id,$$
and thus the condition $a=\alpha^2-1<0$ is simply the condition $|\alpha|<1$.
Thus the necessary and sufficient condition that nonproportional $J_1,J_2$
belong to the same twistor sphere $S=S(I,J)$ is that there exists $\alpha \in \RR$ such that
\begin{equation}
J_1J_2+J_2J_1=2\alpha\cdot Id, \, |\alpha|<1.
\end{equation}

If we drop the restriction $|\alpha|<1$ then the complex structure operators 
$J_1,J_2$ satisfying $J_1^2=J_2^2=-Id,J_1J_2+J_2J_1=2\alpha Id$ generate 
the 4-dimensional algebra $\HH(1)\subset End\,V_\RR$ if $|\alpha|>1$ and the algebra $\HH(0)\subset End\,V_\RR$ if $|\alpha|=1$. Indeed, if $|\alpha|>1$ then the above calculations show that
$R=\frac{\alpha J_1+J_2}{\sqrt{\alpha^2-1}}$ satisfies $R^2=Id$ and anticommutes with 
$I=J_1$, so that $I$ and $R$ generate a subalgebra of $End\,V_\RR$ isomorphic to $\HH(1)$.
If $|\alpha|=1$ then $N=\alpha J_1+J_2$ is a nilpotent operator, $N^2=0$, and $N$ anticommutes
with $I=J_1$  so that $I$ and $N$ generate a subalgebra isomorphic to $\HH(0)$.

Using the above introduced bilinear form $(\cdot,\cdot)$  
 we can universally express the above  process of obtaining $J_1,\alpha J_1+J_2$ from the
original $J_1,J_2$ in all three cases $\varepsilon=-1,0,1$, as
the orthogonalization process 
applied  to $J_1,J_2$.

The image of the set of imaginary units (that is, elements, whose square is equal $-1$) 
of the algebra $\HH(\varepsilon),\varepsilon=-1,0,1,$ under 
a faithful representation
$\HH(\varepsilon) \rightarrow  End\,V_\RR$ is a subset in $Compl$, which we call a {\it (generalized) twistor line
of type $\HH(\varepsilon)$}. Certainly, we need to justify extending the terminology to the cases $\varepsilon=0,1$
by checking that thus defined subsets are indeed complex submanifolds in $Compl$, which 
is a part of the statement of Theorem \ref{Theorem-generalized-twistor-lines}. 

Consider first the case $\HH(1)\hookrightarrow End\,V_\RR$.  Denote the
images of $i$ and $j$ under the embedding as $I$ and $R$, in order to emphasize 
that $j$ acts as a reflection (or rotation by $\pi$) operator on $V_\RR$. Then 
$I^2=-Id,R^2=Id,IR+RI=0$.

Let us describe the complex structure operators contained in the 
image of $\HH(1)\hookrightarrow Compl$. All such are actually contained in the subspace 
$\RR^3=\langle I,R,IR\rangle \subset End\,V_\RR$, that is, identifying $\HH(1)$ with its image under the embedding, we may write 
$$\HH(1)\cap Compl=\langle I,R,IR\rangle \cap Compl.$$ 
The combination $xI+yR+zIR$ is a complex structure operator if and only if
$(xI+yR+zIR)^2=(-x^2+y^2+z^2)Id=-Id$, that is, $x^2-y^2-z^2=1$.
 Then the set $$S(I,R)=\{xI+yR+zIR\,|\,x^2-y^2-z^2=1\}$$
which is a two-sheeted hyperboloid consisting of complex structures, is
a generalized twistor line of the type $\HH(1)$. Note that as the two connected components $S(I,R)^+,S(I,R)^-$ of $S(I,R)$ contain respectively
$I$ and $-I$, they both must be contained in the same connected component $Compl$ of the period domain. Indeed, we know that given $I\in Compl$ we can find a complex structure  $J \in Compl$  anticommuting with $I$ (see, for example, \cite{Twistor-lines}) 
so that $-I=JIJ^{-1}$,
and, as $\det\,J=1$, both $\pm I$ belong to the same connected component of $Compl$
(or, arguing topologically, each $I$ together with $-I$ is contained in a compact connected twistor line $S=S(I,J)$). 

Next, let us consider the case $|\alpha|=1$, that is $\HH(0)\hookrightarrow End\,V_\RR$. In this case we denote 
the images of the generators $i$ and $j$ of $\HH(0)$ by $I$
and 
 $N$, so that $I^2=-Id, N^2=0, IN+NI=0$. 
Then the set $S(I,N)=\HH(0)\cap Compl=\langle I,N,IN\rangle \cap Compl$,
$$S(I,N)=\{\pm I+yN+zIN\,|\,y,z\in \RR\}=\RR^2\cup \RR^2,$$
is a generalized twistor line of type $\HH(0)$.
 Indeed, we have $(xI+yN+zIN)^2=-Id$ 
if and only if $x=\pm 1$.
Again, as the connected components $S(I,N)^+, S(I,N)^-$ of $S(I,N)$ contain $I$ and $-I$, we have that the whole $S(I,N)$
is contained in a connected component of the period domain.


As we said above, the group $G=GL(V_\RR)$ acts, via the adjoint action, on the set of generalized twistor lines,
${}^{g} S(I,J)=S({}^{g} I,{}^{g} J)$ for $g \in G$,  this action certainly preserves the type of the curves.

We denote the tangent cone at the point $p$ of a possibly singular complex manifold $M$ by 
$TC_pM$.
\subsection{The formulations of results}

The following theorem gives an analytic description of the subsets $S(I,R)$ and $S(I,N)$ of $Compl$
and describes their behavior at the ``infinity'' $\mathcal L_\RR=Gr(2n,V_\CC)\setminus Compl$.
\begin{thm}
\label{Theorem-generalized-twistor-lines}
Each of the two sets $S(I,R)=S(I,R)^+\cup S(I,R)^-$ and $S(I,N)=S(I,N)^+\cup S(I,N)^-$ consists of two connected components, each of the components is a smooth complex 1-dimensional
submanifold in $Compl$, diffeomorphic to an open 2-disk. 
Their analytic topology closures   $\overline{S(I,R)}$ and $\overline{S(I,N)}$ in the Grassmanian $Gr(2n,V_\CC) \supset Compl$ 
are complex-analytic curves.  

 The curve $\overline{S(I,R)}$ is a $\mathbb{P}^1\subset Gr(2n,V_\CC)$, 
that intersects $\mathcal L_\RR$ along 
 the real-analytic circle $S^1=\overline{S(I,R)}\cap \mathcal L_\RR
\subset \{U\in Gr(2n,V_\CC)\,|\, 
\dim_\RR\,U\cap V_\RR=2n\}\subset \mathcal L_\RR$,
and for every $p\in S^1$ 
we have $T_p\overline{S(I,R)} \cap TC_p\mathcal L_\RR=T_pS^1$.

 The curve $\overline{S(I,N)}$ consists of two connected components 
$\overline{S(I,N)}^{\pm}=\overline{S(I,N)^{\pm}}$, each of which is a
$\mathbb{P}^1\subset Gr(2n,V_\CC)$ with exactly one point $p_{\pm}=\overline{S(I,N)}^{\pm} \cap \mathcal L_\RR$ at infinity. The points $p_{\pm}$ are singular points
of $\mathcal L_\RR$  and the  tangent planes 
$T_{p_\pm}\overline{S(I,N)}$ intersect the respective tangent cones $TC_{p_\pm}\mathcal L_\RR$ trivially.
\end{thm}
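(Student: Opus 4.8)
\emph{Strategy.} For each complex structure $J'$ lying on one of the curves $S(I,R)$, $S(I,N)$ I would first compute its period $(Id-iJ')V_\RR\in Gr(2n,V_\CC)$ explicitly, as the graph of a linear map between two fixed $2n$-dimensional subspaces of $V_\CC$. Holomorphicity, the $\PP^1$-compactifications, the loci at infinity, and — in suitable affine charts of the Grassmannian — the tangent-cone statements, then all become accessible by direct computation.

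\emph{The case $\HH(1)$.} Write $V_\RR=V_\RR^+\oplus V_\RR^-$ for the $(\pm1)$-eigenspaces of $R$; since $I$ interchanges them, $\dim_\RR V_\RR^{\pm}=2n$, and with $V_\pm=V_\RR^{\pm}\otimes\CC$ one has $V_\CC=V_+\oplus V_-$ (conjugation preserving each $V_\pm$) and $I|_{V_+}\colon V_+\to V_-$ an isomorphism. A direct computation shows that for $J'=xI+yR+zIR$ with $x^2-y^2-z^2=1$ the $(+i)$-eigenspace of $J'$ is the graph $U_\lambda$ of $\lambda\,(I|_{V_+})$, where $\lambda=(y-i)/(x-z)$, and that the two sheets of the hyperboloid go diffeomorphically onto the two open half-planes, each an open $2$-disk. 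In the affine chart of graphs $V_+\to V_-$ the map $\lambda\mapsto U_\lambda$ is linear, hence holomorphic; in the complementary chart it is $\mu=1/\lambda\mapsto-\mu\,(I|_{V_-})$, so it extends to an injective holomorphic immersion $\PP^1\hookrightarrow Gr(2n,V_\CC)$ (with $\infty\mapsto V_-$), i.e.\ a closed embedding, and since $S(I,R)$ (the locus $\lambda\notin\RR$) is dense in the image, $\overline{S(I,R)}$ is this $\PP^1$. A short conjugation argument gives $U_\lambda\cap V_\RR=\{0\}$ for $\lambda\notin\RR\cup\{\infty\}$ and $\dim_\RR(U_\lambda\cap V_\RR)=2n$ otherwise, so $S^1:=\overline{S(I,R)}\cap\mathcal L_\RR$ is the real-analytic circle $\{\lambda\in\RR\cup\{\infty\}\}$, lying in $\{U\mid\dim_\RR U\cap V_\RR=2n\}$. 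For the tangential claim at $p=U_{\lambda_0}$ with $\lambda_0\in\RR$: in the chart of graphs $A\colon V_+\to V_-$ write $A-\lambda_0(I|_{V_+})=B_1+iB_2$ with $B_1,B_2\colon V_\RR^+\to V_\RR^-$; then $U_A\in\mathcal L_\RR\iff\det B_2=0$, a homogeneous equation of degree $2n$, so $TC_p\mathcal L_\RR=\{(B_1,B_2)\mid\det B_2=0\}$. Along the curve $B=\mu\,(I|_{V_+})$, hence $B_2=(\operatorname{Im}\mu)\,(I|_{V_+})$; as $I|_{V_+}$ is invertible, $\det B_2=0\iff\operatorname{Im}\mu=0$, which cuts out exactly $T_pS^1$. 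The point $\lambda_0=\infty$ is treated identically in the complementary chart.

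\emph{The case $\HH(0)$: parametrization and the point at infinity.} Work over $\CC$ with $E_\pm=\ker(I\mp i\cdot Id)$, so $V_\CC=E_+\oplus E_-$, conjugation interchanges $E_\pm$, and $N$ interchanges them; put $N_\pm=N|_{E_\pm}$, so $N_+N_-=N_-N_+=0$, $N_\pm\neq0$, $\overline{N_+}=N_-$, and $r:=\rank N_+=\rank N_-\in\{1,\dots,n\}$. A direct computation shows the $(+i)$-eigenspace of $J'=I+yN+zIN$ is the graph of $tN_+$ with $t=(y-iz)/2i$, and $(y,z)\mapsto t$ is a diffeomorphism $\RR^2\to\CC$; as before (the inclusion in $Compl$ uses $N_-N_+=0$) this makes $S(I,N)^+$ a smooth complex curve in $Compl$ diffeomorphic to a disk, and $t\mapsto\mathrm{graph}(tN_+)$ extends — via the Pl\"ucker embedding, in which it is a rational curve of degree $\rank N_+$ — to an injective holomorphic immersion $\PP^1\hookrightarrow Gr(2n,V_\CC)$ whose value at $\infty$ is $p_+:=\ker N_+\oplus\operatorname{im} N_+$. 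One checks $p_+\cap\overline{p_+}=\operatorname{im} N_+\oplus\operatorname{im} N_-\neq\{0\}$, so $p_+\in\mathcal L_\RR$ and $\dim_\RR(p_+\cap V_\RR)=2r$; the component $S(I,N)^-$ and its point $p_-$ are handled by the symmetry $I\mapsto-I$.

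\emph{The case $\HH(0)$: local analysis at $p_\pm$ (the main step).} Choose a basis of $V_\CC$ adapted to $E_+\oplus E_-$, to $\ker N_\pm$, $\operatorname{im} N_\pm$ and complements $C_\pm$ with $N|_{C_\pm}$ an isomorphism onto $\operatorname{im} N_\mp$, and compatible with conjugation, and use the chart of $Gr(2n,V_\CC)$ of graphs $\psi\colon p_+\to W$ over a fixed complement $W$. Then the curve through $p_+$ is a complex line $\CC\Phi$ for an explicit block map $\Phi\colon p_+\to W$ (essentially the inverse of $N|_{C_+}$), and $\mathcal L_\RR=\{F=0\}$ with $F(\psi)=\det M(\psi)$, where $M(\psi)$ juxtaposes a holomorphic frame of $\mathrm{graph}(\psi)$ with its complex conjugate; with the natural frame $F$ is real-valued, and $M(0)$ has corank exactly $2r$ since $\dim(p_+\cap\overline{p_+})=2r$. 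A Schur-complement argument then identifies the leading term of $F$ at $p_+$ with a nonzero constant times $\det L(\psi)$, where $L$ is the $2r\times2r$ matrix obtained by restricting the linear part $M_1$ of $M$ to $\ker M(0)$ (columns) and $\ker M(0)^{\mathsf T}$ (rows); since $L$ is only $\RR$-linear in $\psi$, $\det L$ is a real-homogeneous polynomial of degree $2r$ and $TC_{p_+}\mathcal L_\RR=\{\det L=0\}$. The final computation is to evaluate $\det L$ on the real $2$-plane $\CC\Phi$: in the adapted basis it comes out, up to a nonzero constant, equal to $|s|^{2r}$ for $\psi=s\Phi$ ($s\in\CC$), which vanishes only at $s=0$. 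Hence $\CC\Phi$ meets $TC_{p_+}\mathcal L_\RR$ only in $\{0\}$ — so on one hand $TC_{p_+}\mathcal L_\RR$ is not a linear subspace and $p_+$ is a singular point of $\mathcal L_\RR$, and on the other $T_{p_+}\overline{S(I,N)}\cap TC_{p_+}\mathcal L_\RR=\{0\}$. The main obstacle is exactly this last step: arranging the adapted basis so that the curve's tangent direction $\Phi$ and the corank-$2r$ degeneracy of $M(0)$ are simultaneously transparent, and then carrying out the non-degeneracy computation for $L$ restricted to $\CC\Phi$; everything else is routine once the period point is written as a graph.
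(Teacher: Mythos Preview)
Your proposal is correct and takes a genuinely different route from the paper. The paper verifies holomorphicity by checking $l_p$-invariance of the tangent spaces $T_pS(I,R)$, $T_pS(I,N)$ directly, then identifies the points at infinity by computing limits of explicit curves in $S(I,R)$ or $S(I,N)$ (e.g.\ $c(t)=tI+\sqrt{t^2-1}\,R$), and finally obtains the tangent cone at each boundary point by differentiating the ``reversed'' curves $\widetilde C(t)=C(1/t)$ at $t=0$ and assembling the resulting vectors into a $2$-plane; non-membership in $TC_p\mathcal L_\RR$ is checked via the necessary condition that $\varphi(p\cap V_\RR)$ meet $V_\RR$ (mod $p$). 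You instead write the period map in graph coordinates over an eigenspace ($V_+$ for $R$, $E_+$ for $I$), where it becomes \emph{linear}: for $\HH(1)$ the substitution $\lambda=(y-i)/(x-z)$ turns the period map into $\lambda\mapsto\mathrm{graph}(\lambda\cdot I|_{V_+})$, so holomorphicity, the $\PP^1$-extension, and the boundary circle $\{\lambda\in\RR\cup\{\infty\}\}$ are immediate, and the tangent-cone statement reduces to $\det B_2=(\operatorname{Im}\mu)^{2n}\det(I|_{V_+})$ along the line $B=\mu\,I|_{V_+}$. For $\HH(0)$ you replace the paper's curve-by-curve analysis at $p_+$ with a Schur-complement computation of the leading term of $F=\det M$. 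What your approach buys is transparency --- the $\PP^1$ structure and the boundary locus fall out of a single linear-algebra identification, and the $\HH(1)$ case in particular becomes very short compared to the paper's limit analysis. What the paper's approach buys is that for $\HH(0)$ it avoids adapted bases and Schur complements: once the explicit tangent vectors $\widetilde\varphi_{\alpha+i\beta}$ are written down, the check $\widetilde\varphi\notin TC_{p_+}\mathcal L_\RR$ is a two-line computation. Your ``final computation'' does go through: in the adapted basis the matrix $L(s\Phi)$ is, up to ordering of rows and columns, the block matrix $\begin{pmatrix}0 & sI_r\\ \bar s I_r & 0\end{pmatrix}$, whose determinant is $\pm|s|^{2r}$, so the asserted non-degeneracy holds.
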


Let 
 $Hdg_S=\{\Omega \in Hom(\wedge^2V_\RR,\RR)\,|\, \lambda^t\Omega\lambda=\Omega,\lambda\in S\}$ be the subspace of the alternating forms $\Omega$ on $V_\RR$ 
(where $\lambda$ and $\Omega$ are written in a fixed basis of $V_\RR$), and 
determining cohomology classes in $H^{1,1}((V_\RR/\Gamma,\lambda),\RR)$ for all $\lambda \in S$. 
%
Note that as the connected components of every generalized twistor line 
are central-symmetric to each other, that is, for every $\lambda\in S^+$
we have $-\lambda \in S^-$, the subspace $Hdg_S$ does not change, if in its definition 
we replace $S$ with its connected component. 

In \cite{Twistor-lines} we considered ``A toy example'' of a compact twistor line $S$ in the period domain of 
complex tori of dimension 2. 
There we showed that the dimension of $Hdg_S$
 is 3 and $Hdg_S$ does not contain any K\"ahler classes,
or, in other words,  following the notations of \cite{Twistor-lines},
$S$ is not contained in any  locus $Compl_\Omega=\{I\in Compl\,|I^t\Omega I=\Omega\}\subset Compl$,
where the alternating 2-form $\Omega$ represents a K\"ahler class in $H^{1,1}(A,\RR)$ for a 
compact complex $2n$-torus $A$ .

One may ask if a generalized twistor line $S$ is contained in any K\"ahler locus $Compl_\Omega$
or not. Here we answer this question  for the general dimension case. 

\begin{thm}
\label{Kahler-theorem}
For any twistor line $S$ of the type $\HH(-1)$ we have $\dim\,Hdg_{S}=2n^2+n$ and $Hdg_{S}$ 
does not contain any K\"ahler classes. All representations
$\HH(-1) \rightarrow End\,V_\RR$ are equivalent, or, which is the same,  the adjoint action of 
$GL(V_\RR)$ on the set of compact twistor lines in $Compl$ is transitive. 

For any twistor line $S$ of type $\HH(1)$ we have $\dim Hdg_S=2n^2+n$. 
The subspace $Hdg_S$ contains two disjoint open cones of K\"ahler classes, corresponding to each of
 the connected components of $S$. 
 All faithful representations
$\HH(1) \rightarrow End\,V_\RR$ are equivalent, or, which is the same,
the action of $GL(V_\RR)$
on the set of twistor lines of type $\HH(1)$ in $Compl$ is transitive. 

There are $n$ non-equivalent faithful representations $\HH(0) \rightarrow End\,V_\RR$
parametrized by integers $1\leqslant k \leqslant n$, equivalently, there are $n$ distinct 
 $GL(V_\RR)$-orbits of lines of the type $\HH(0)$ in $Compl$.
For any non-compact twistor line $S$  of the type $\HH(0)$ we have  
$\dim Hdg_S=k(k+1)+(2n-k)^2$, 
for the respective parameter $k$,
and $Hdg_S$ does not contain any K\"ahler classes for either of the two connected components
of $S$.
\end{thm}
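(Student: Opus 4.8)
The plan is to reduce every assertion to linear algebra on the image $\A=\HH(\varepsilon)\subset\operatorname{End}V_\RR$ of the representation, where I write $\A_0\subset\A$ for the $3$-dimensional subspace of traceless elements. In each of the three cases $\A_0=\langle I,Y,Z\rangle$ with $(Y,Z)$ equal to $(J,K)$, $(R,IR)$ or $(N,IN)$; every $\lambda\in S$ is traceless and lies in $\A$, so $\lambda\in\A_0$, and $S$ spans $\A_0$. The key elementary observation is that for a complex structure $\lambda$ (so $\lambda^2=-\mathrm{Id}$, $\lambda^{-1}=-\lambda$) one has $\lambda^t\Omega\lambda=\Omega\iff\lambda^t\Omega+\Omega\lambda=0$, obtained by right-multiplying the first equation by $-\lambda$; since the second condition is linear in $\lambda$ and $S$ spans $\A_0$, we get
$$Hdg_S=\{\,\Omega\in\Hom(\wedge^2V_\RR,\RR)\ \mid\ \mu^t\Omega+\Omega\mu=0\ \text{for all}\ \mu\in\A_0\,\}.$$
Thus $Hdg_S$ is the space of alternating forms annihilated by the infinitesimal $\A_0$-action; for $\varepsilon=\pm1$, where $\A_0$ is a Lie subalgebra ($\cong\mathfrak{su}(2)$, resp.\ $\mathfrak{sl}_2(\RR)$), it is the space of $\A_0$-invariant alternating forms.

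Next I would settle the classification of representations. As $\HH(-1)=\HH$ is a division algebra, every module over it is free, so $V_\RR\cong\HH^{\,n}$ for any representation; two embeddings $\HH\hookrightarrow\operatorname{End}V_\RR$ thus differ by a change of quaternionic basis, i.e.\ by an element of $GL(V_\RR)$, which gives transitivity of the adjoint action on compact twistor lines. As $\HH(1)\cong M_2(\RR)$ is simple with unique simple module $\RR^2$, every faithful representation gives $V_\RR\cong(\RR^2)^{\oplus 2n}$, and again all such are $GL(V_\RR)$-equivalent. For $\HH(0)$ I would use $I$ to regard $V_\RR=(\CC^{2n},I)$; then $N$ is a $\CC$-antilinear operator with $N^2=0$, so $\operatorname{im}N\subseteq\ker N$, and $k:=\rank_\CC\operatorname{im}N$ is a complete invariant of $N$ up to $GL(2n,\CC)$ (normal form: a single $k\times k$ block). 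Two faithful representations are then $GL(V_\RR)$-conjugate iff they have the same $k$; the only proper nonzero ideal of $\HH(0)$ is $\langle j,ij\rangle$, which is killed exactly when $N=0$, so faithfulness forces $k\geqslant1$, while $\operatorname{im}N\subseteq\ker N$ forces $k\leqslant n$. This produces exactly the $n$ orbits $k=1,\dots,n$.

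For the dimension of $Hdg_S$: in the cases $\varepsilon=\pm1$ complexification gives $V_\CC\cong\CC^2\otimes_\CC\CC^{2n}$ with $\mathfrak{sl}_2(\CC)$ acting on the first factor only, whence
$$(\wedge^2V_\CC^{*})^{\mathfrak{sl}_2}\ \cong\ \bigl(\wedge^2(\CC^2)^{*}\bigr)^{\mathfrak{sl}_2}\!\otimes\operatorname{Sym}^2(\CC^{2n})^{*}\ \oplus\ \bigl(\operatorname{Sym}^2(\CC^2)^{*}\bigr)^{\mathfrak{sl}_2}\!\otimes\wedge^2(\CC^{2n})^{*}\ \cong\ \operatorname{Sym}^2(\CC^{2n})^{*},$$
of complex dimension $\binom{2n+1}{2}=2n^2+n$, so the real invariant space has the same dimension. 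For $\varepsilon=0$: the relation $\mu^t\Omega+\Omega\mu=0$ with $\mu=I$ says $\Omega$ is of type $(1,1)$ for $I$, i.e.\ $\Omega=\operatorname{Im}h$ for a unique Hermitian form $h$ on $(\CC^{2n},I)$; the relation with $\mu=IN$ follows from those with $\mu=I$ and $\mu=N$ because $IN+NI=0$; and in the decomposition $\CC^{2n}=U_1\oplus U_2\oplus U_3$ adapted to $N$ (with $\dim_\CC U_1=\dim_\CC U_2=k$, $N\colon U_2\xrightarrow{\ \sim\ }U_1$ and $N\equiv0$ on $U_1\oplus U_3$), the relation with $\mu=N$ is equivalent to $h|_{U_1\times(U_1\oplus U_3)}=0$ together with $h|_{U_1\times U_2}$ symmetric, leaving $h|_{U_2}$, $h|_{U_2\times U_3}$ and $h|_{U_3}$ free. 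Counting real parameters gives $\dim Hdg_S=k(k+1)+\bigl(k^2+2k(2n-2k)+(2n-2k)^2\bigr)=k(k+1)+(2n-k)^2$.

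Finally, the K\"ahler classes. When $\varepsilon=-1$: if some $\Omega\in Hdg_S$ were K\"ahler for a $\lambda\in S$ (that is, $u\mapsto\Omega(u,\lambda u)$ positive definite) I would pick a unit $\mu\in S$ with $\mu\lambda=-\lambda\mu$ and use $\mu$-invariance of $\Omega$ together with $\mu^{-1}\lambda\mu=-\lambda$ to get $\Omega(\mu u,\lambda\mu u)=-\Omega(u,\lambda u)$, contradicting positive definiteness. When $\varepsilon=0$: writing $\lambda=\pm I+M$ with $M\in\langle N,IN\rangle$ and using $N^2=0$ and $N^t\Omega+\Omega N=0$ one gets $\Omega(Nv,\lambda Nv)=\pm\Omega(Nv,INv)=0$ for all $v$, so $u\mapsto\Omega(u,\lambda u)$ vanishes on the nonzero subspace $\operatorname{im}N$ and cannot be positive definite. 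When $\varepsilon=1$: the set $S\cap\operatorname{End}V_\RR$ consists of the trace-zero elements of $SL_2(\RR)$ acting diagonally on $(\RR^2)^{\oplus 2n}$, and the standard area form $\omega_0$ on $\RR^2$ is $SL_2(\RR)$-invariant, so $\Omega_0:=\omega_0^{\oplus 2n}\in Hdg_S$ and $u\mapsto\Omega_0(u,\lambda u)$ is positive definite for every $\lambda$ in the sheet $S(I,R)^{+}$ and negative definite for $\lambda\in S(I,R)^{-}$; hence $Hdg_S$ contains the two nonempty, disjoint, open convex cones $\{\Omega\mid\Omega(u,\lambda u)>0\ \text{for all}\ u\neq0\ \text{and all}\ \lambda\in S(I,R)^{\pm}\}$, one per component. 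I expect the $\varepsilon=0$ dimension count to be the main obstacle: one must extract precisely the constraints that the antilinear operator $N$ imposes on the Hermitian form $h$ written in block form, with no over- or under-counting, and establish along the way the normal form for $\CC$-antilinear square-zero operators that underlies the classification step.
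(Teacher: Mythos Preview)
Your argument is correct and substantially more conceptual than the paper's. Let me compare and then flag two small points you should tighten.

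\textbf{What the paper does.} The paper fixes explicit matrix realizations of $I,J$ (resp.\ $I,R$; resp.\ $I,N$), writes $Q$ in block form, and solves the invariance equations by hand. For the K\"ahler question it then writes down period matrices $(\mathbb{1}\mid Z)$ and verifies or refutes the second Riemann bilinear relation $-i\Pi^tQ\overline{\Pi}>0$ by direct computation.

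\textbf{What you do differently.} You linearize the defining relation of $Hdg_S$ once and for all via $\lambda^{-1}=-\lambda$, identify $Hdg_S$ with the space of $\A_0$-invariant alternating forms, and for $\varepsilon=\pm1$ invoke the $\mathfrak{sl}_2$-decomposition of $\wedge^2(\CC^2\otimes\CC^{2n})$ to read off $\dim Hdg_S=2n^2+n$ without any matrices. Your K\"ahler arguments are likewise coordinate-free: the $\mu$-conjugation trick for $\varepsilon=-1$ replaces a page of period-matrix calculation, and the observation that $\omega_0^{\oplus 2n}$ is $SL_2(\RR)$-invariant handles $\varepsilon=1$ existence in one line. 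For $\varepsilon=0$ you translate everything to constraints on a Hermitian form $h$; this matches the paper's block-matrix count ($k(k+1)+(2n-k)^2$) but is organized by the flag $\operatorname{Im}N\subset\ker N\subset V$ rather than by explicit coordinates. The paper's approach buys complete explicitness (e.g.\ it identifies the K\"ahler cone for $S(I,R)^+$ exactly as $\{B>0\}$); yours buys brevity and makes the structural reason visible.

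\textbf{Two points to sharpen.} First, the phrase ``$h|_{U_1\times U_2}$ symmetric'' is ambiguous since $U_1\neq U_2$; you mean that the matrix $(h(Ne_i,e_j))_{i,j}$ is complex symmetric, using the antilinear isomorphism $N\colon U_2\to U_1$ to compare slots. State this explicitly. Second, the openness of your K\"ahler cone for $S(I,R)^+$ is not immediate, because $S^+$ is non-compact and an infinite intersection of open conditions need not be open. Your $\mathfrak{sl}_2$-identification $Hdg_S\cong\operatorname{Sym}^2(\RR^{2n})^*$, $\Omega\leftrightarrow B$, rescues this: writing $u=e_1\otimes p+e_2\otimes q$ one computes $\Omega(u,\lambda u)=(a+c)B(p,p)-2bB(p,q)+(a-c)B(q,q)$, whose discriminant in $(p,q)$ is governed by $b^2-(a^2-c^2)=-1$, so positivity for all $\lambda\in S^+$ is equivalent to $B>0$. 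Either include this computation or note that $GL(2n,\RR)$ acts on the second tensor factor, commutes with $\HH(1)$, and is transitive on positive definite $B$, hence the cone is the $GL(2n,\RR)$-orbit of your $\Omega_0$ and thus equals the open set $\{B>0\}$.
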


Now let us get to the problem of the twistor path connectivity of $Compl$.
 We say that $Compl$ is {\it $\HH(\varepsilon)$-connected}
if any two point in the same connected component of $Compl$ can be connected by a path (a chain) of 
connected components of twistor lines of the type $\HH(\varepsilon)$.
The $\HH(-1)$-connectivity of $Compl$ was proved in \cite{Twistor-lines},
here we present a proof that $Compl$ is $\HH(1)$-connected.

In fact we will formulate and prove the
connectivity for both compact lines and non-compact lines of the type $\HH(1)$  in a uniform manner,
so that we include the compact case in the formulation of the following theorem.

\begin{thm}
\label{Connectivity-theorem}
The period domain $Compl$ is $\HH(\varepsilon)$-connected for $\varepsilon=-1,1$.
\end{thm}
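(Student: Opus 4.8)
The plan is to fix a connected component $\mathcal{C}$ of $Compl$ and an $I_0\in\mathcal{C}$, and to show that $R_\varepsilon(I_0)$, the set of complex structures reachable from $I_0$ by a finite chain of connected components of twistor lines of type $\HH(\varepsilon)$, is all of $\mathcal{C}$; since $\mathcal{C}$ is connected and $I_0\in R_\varepsilon(I_0)$, it suffices to prove $R_\varepsilon(I_0)$ is open and closed in $\mathcal{C}$. Two structural observations underlie everything. (i) Sliding $I$ along a connected component of a type-$\HH(\varepsilon)$ line is $I\mapsto {}^{e^{s\xi}}I=e^{2s\xi}I$ for an operator $\xi$ anticommuting with $I$ and with $\xi^2=-Id$ (when $\varepsilon=-1$) or $\xi^2=Id$ (when $\varepsilon=1$, taking $\xi$ an appropriate $R$ or $IR$), by the parametrizations of Subsection~\ref{Algebraic-section}; in particular $R_\varepsilon(I_0)\subseteq GL^+(V_\RR)\cdot I_0=\mathcal{C}$, where one uses that $GL^+(V_\RR)$ is transitive on each component of $Compl$ (the stabilizer $GL(2n,\CC)$ being connected), or, alternatively, Theorem~\ref{Theorem-generalized-twistor-lines}, which keeps each sheet of a type-$\HH(1)$ line in a single component. (ii) Chains reverse and concatenate, and conjugation by any $g\in GL(V_\RR)$ takes type-$\HH(\varepsilon)$ lines to type-$\HH(\varepsilon)$ lines (it is an algebra automorphism of $End\,V_\RR$); hence $\Gamma_I:=\{g\in GL^+(V_\RR)\,:\,{}^gI\in R_\varepsilon(I)\}$ is a subgroup, $R_\varepsilon(I)=\Gamma_I\cdot I$, and $I'\in R_\varepsilon(I)$ forces $R_\varepsilon(I')=R_\varepsilon(I)$.

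The crux is a spanning lemma, uniform in $\varepsilon$: at every $I\in Compl$ the velocities at $I$ of the one-parameter twistor motions through $I$ span $T_ICompl=\{A\in End\,V_\RR\,:\,AI+IA=0\}$. These velocities are the $2\xi I$ with $\xi$ as in (i); as $\xi$ varies they run through (twice) the set $\mathcal{J}(I)=\{J\,:\,J^2=-Id,\,JI+IJ=0\}$ when $\varepsilon=-1$, and through $\pm2\,\mathcal{R}(I)$ with $\mathcal{R}(I)=\{R\,:\,R^2=Id,\,RI+IR=0\}$ when $\varepsilon=1$, so the lemma asserts that $\mathcal{J}(I)$ (resp. $\mathcal{R}(I)$) spans $T_ICompl$. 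I would prove both at once by complexification: $T_ICompl$ is identified, as a real vector space, with $\Hom_\CC(V^{1,0},V^{0,1})\cong M_{2n}(\CC)$, under which $\mathcal{J}(I)$ and $\mathcal{R}(I)$ become $\{\varphi:\bar\varphi\varphi=-Id\}$ and $\{\varphi:\bar\varphi\varphi=Id\}$ respectively. Each of these is nonempty (Subsection~\ref{Algebraic-section}), is stable under $\varphi\mapsto e^{i\theta}\varphi$ and under $\varphi\mapsto g\varphi g^{-1}$ for $g\in GL_{2n}(\RR)$, and a short computation with its conjugacy classes --- using that $\mathfrak{sl}_{2n}(\RR)$ is conjugation-irreducible and exhibiting one member whose trace is not purely imaginary --- shows it spans $M_{2n}(\CC)$ over $\RR$.

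Granting the spanning lemma, I would deduce local surjectivity: every $I'\in Compl$ has a neighbourhood contained in $R_\varepsilon(I')$. Choose $\xi_1,\dots,\xi_d$ ($d=\dim_\RR Compl$) as in (i) with $\xi_1I',\dots,\xi_dI'$ a basis of $T_{I'}Compl$ (possible by the spanning lemma); then $(s_1,\dots,s_d)\mapsto e^{s_1\xi_1}\cdots e^{s_d\xi_d}\cdot I'$ has invertible differential at the origin, so by the inverse function theorem its image is a neighbourhood of $I'$, and that image lies in $\Gamma_{I'}\cdot I'=R_\varepsilon(I')$ precisely because $\Gamma_{I'}$ is a group containing each $e^{s\xi_j}$ (an orbit theorem of Sussmann type gives the same conclusion). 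With local surjectivity in hand, $R_\varepsilon(I_0)$ is open (near $I'\in R_\varepsilon(I_0)$ it contains the neighbourhood supplied by $R_\varepsilon(I')=R_\varepsilon(I_0)$), and $\mathcal{C}\setminus R_\varepsilon(I_0)$ is open (near $I''\notin R_\varepsilon(I_0)$ the neighbourhood $R_\varepsilon(I'')$ of $I''$ must be disjoint from $R_\varepsilon(I_0)$, else $I''$ would be reachable); since $\mathcal{C}$ is connected and $I_0\in R_\varepsilon(I_0)$, we conclude $R_\varepsilon(I_0)=\mathcal{C}$.

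The main obstacle is the spanning lemma in the new case $\varepsilon=1$: one must verify that the reflections $R$ (with $R^2=Id$, anticommuting with $I$) together with the operators $IR$ already sweep out infinitesimally every tangent direction to $Compl$ at $I$, which is the one genuinely computational point --- for $\varepsilon=-1$ this belongs to the circle of facts behind the connectivity theorem of \cite{Twistor-lines}. The rest is formal: reducing moves to one-parameter subgroups, the subgroup property of $\Gamma_I$ --- which is exactly what allows the composite flow map to land inside $R_\varepsilon(I')$ even though the factors $e^{s_k\xi_k}$ are twistor moves only at $I'$ and not at the intermediate points --- and the open--closed wrap-up.
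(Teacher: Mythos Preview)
Your argument is correct and complete, but it proceeds along a genuinely different line from the paper's proof.

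The paper does not use a spanning lemma or products of many one-parameter subgroups. Instead, it fixes three linearly independent complex structures $I_1,I_2,I_3$ on a single line $S$ of type $\HH(\varepsilon)$ and studies the two-factor map
\[
\Phi\colon G_{I_1}\times G_{I_2}\to Compl,\qquad (g_1,g_2)\mapsto {}^{g_1g_2}I_3 .
\]
Its differential at $(e,e)$ factors through $T_eG_{I_1}/T_eG_{\HH(\varepsilon)}\oplus T_eG_{I_2}/T_eG_{\HH(\varepsilon)}$, and the heart of the proof is the transversality statement (Lemma~\ref{Transversality-lemma} and Proposition~\ref{Proposition-general-transversality}) that $T_eG/T_eG_{\HH(\varepsilon)}=V_{I_1}\oplus V_{I_2}\oplus V_{I_3}$. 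This forces the factored differential to be an isomorphism, so $\Phi$ is a submersion near $(e,e)$; every nearby $I$ is then joined to $I_3$ by the three-line chain $S,\ {}^{g_1}S,\ {}^{g_1g_2}S$, and connectedness of $Compl$ finishes.

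Your route trades this structural transversality for a softer control-theoretic argument: you only need that the tangent directions of type-$\HH(\varepsilon)$ motions through $I$ span $T_ICompl$, together with the observation that $\Gamma_I$ is a subgroup (the point that makes the $d$-factor map land inside $R_\varepsilon(I)$ even though the later factors are not twistor moves \emph{from} the intermediate points). This is more elementary and more portable; the price is that you lose the quantitative ``three lines suffice locally'' conclusion that the paper's proof yields, replacing it by $d=\dim_\RR Compl=8n^2$ lines. Your spanning lemma is correct---for $\varepsilon=1$ the matrix $M=\mathbb{1}_{2n}$ (realizing an $R$ with $RI=-IR$, $R^2=Id$) already has nonzero trace, and for $\varepsilon=-1$ one checks directly, e.g.\ in the $2\times 2$ block picture, that $M=\begin{pmatrix}1&i\\-2i&1\end{pmatrix}$ satisfies $\bar M M=-\mathbb{1}_2$ with $\mathrm{tr}\,M=2\neq 0$---so the $\CC$-span, being $GL_{2n}(\RR)$-conjugation stable and hence a sub-$GL_{2n}(\CC)$-module of $M_{2n}(\CC)$, must be all of $M_{2n}(\CC)$. (Your phrase ``trace not purely imaginary'' should simply read ``nonzero trace'', since $e^{i\theta}$-invariance already makes the span a $\CC$-subspace.)
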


The idea of the proof is very similar to that in \cite{Twistor-lines}. 
The methods that we are using do not seem to directly apply to the problem of $\HH(0)$-connectivity,
so this problem remains open.

\begin{rem}
Note that Theorem \ref{Kahler-theorem} and \ref{Connectivity-theorem}
do not imply the $\HH(1)$-connectivity of the locus of polarized tori in $Compl$. 
Here these two theorems merely establish that the paths of non-compact lines along each of which
some  K\"ahler classes survive can be used for connecting points, so that in that regard these lines are not 
more specific than the compact lines, along which, by Theorem  \ref{Kahler-theorem} no K\"ahler classes survive.
\end{rem}

\section{Proof of Theorem \ref{Theorem-generalized-twistor-lines}}
\label{Generalized-twistor-lines}
\subsection{The complex-analytic structure} Let us start with checking first that $S(I,R)$ and $S(I,N)$ are complex-analytic 
submanifolds in $Compl$.
In order to see this we note that the tangent space to $S(I,R)$ at $p=xI+yR+zIR$
is $$T_pS(I,R)=\{uI+vR+wIR\,|\, xu-yv-zw=0\},$$
and, as the complex structure on $TCompl \subset TEnd\,V_\RR|_{Compl}$ 
 acts on $T_pCompl$ by  the left multiplication by $p\in Compl \subset End\,V_\RR$
(see, for example, \cite{Twistor-lines}), $$l_p\colon T_pCompl\rightarrow T_pCompl, a \mapsto p\cdot a,$$
it is enough to check that $l_p(T_pS(I,R))=T_pS(I,R)$, that is, 
that for $uI+vR+wIR \in T_pS(I,R)$ we have $l_p(uI+vR+wIR)=(xI+yR+zIR)(uI+vR+wIR)=
(-xu+yv+zw)Id+(zv-yw)I+(zu-xw)R+(xv-yu)IR=(zv-yw)I+(zu-xw)R+(xv-yu)IR$
is in $T_pS(I,R)$, which reduces to checking that
$x(zv-yw)-y(zu-xw)-z(xv-yu)=0$. Thus $S(I,R)$ is a complex analytic subset in $Compl$.

Now, to verify the analyticity of $S(I,N)$  for an arbitrary $p=\pm I +yN+zIN \in S(I,N)$ we 
need to check the invariance  of the respective tangent space, $l_p(T_pS(I,N))=T_pS(I,N)$.
Like previously $(\pm I +yN+zIN)(vN+wIN)=\pm(-w N+v IN) \in T_pS(I,N)$, which shows the required
$l_p$-invariance of $T_pS(I,N)$. 

\medskip

Let us now study the points of the euclidean closures of $\overline{S(I,R)},\overline{S(I,N)}$ at the infinity
of the space $Compl$, that is, at the 
 locus $\mathcal L_\RR$.
Further we separately consider the cases of $S(I,R)$ and $S(I,N)$.

\medskip

{\it The case of $S(I,R)$.} 
\subsection{Points at infinity} Fix $xI+yR+zIR \in S(I,R)$. Up to change of coordinates, we can always assume that $z=0$ as $(yR+zIR)^2=y^2+z^2=x^2-1\geqslant 0$
and introducing $R_1=\frac{1}{\sqrt{x^2-1}}(yR+zIR)$ we get that $xI+yR+zIR=xI+\sqrt{x^2-1}R_1
\in S(I,R_1)=S(I,R)$. Next, we consider the real  curve 
$t\mapsto c(t)=tI+\sqrt{t^2-1}R \in S(I,R),t\geqslant 1$, this curve is actually contained in $S(I,R)^+$. 
The corresponding curve in the Grassmanian $C\colon [1,\infty) \rightarrow Gr(2n, V_\CC)$ is given by
$y \mapsto (Id-ic(t))V_\RR$. 
Then we have the equality of the points of the Grassmanian
$$C(t)=(Id-ic(t))V_\RR=\frac{1}{t}(Id-ic(t))V_\RR,$$
so instead of $Id-ic(t)$ we may consider the operator
$$\frac{1}{t}(Id-ic(t))=
\frac{1}{t}Id-i\left( I+\frac{\sqrt{t^2-1}}{t}R\right),$$ 
for which we see that $\underset{t \rightarrow \infty}{\lim }\, \frac{1}{t}(Id-ic(t)) =-i(I+R) \in End\, V_\CC$. 

Let us clarify on what kind of an operator $I+R$ is. 
It is clear that, as $I$ and $R$ anticommute, $(I+R)^2=0$ and $I+R\neq 0$.
 Next, $I+R=I(Id-IR)=(Id+IR)I$ and we see that  $I$ provides an isomorphism 
between eigenspaces 
$Ker\,(Id-IR)$ and  $Ker\,(Id+IR)$ of $IR$, which together with $V_\RR=Ker\,(Id-IR) \oplus Ker\,(Id+IR)$
gives that $\dim\, Ker\,(Id-IR)=\dim\, Ker\,(Id+IR)=2n$.
So we conclude that $\rank\, (I+R)=\rank\,(Id-IR)=2n.$ As $(I+R)^2=0$ we see that
$Im\,(I+R)\subset Ker\, (I+R)$ and so, from 
$\dim\,V_\RR=\dim\, Ker(I+R)+\dim\,Im(I+R)=4n$, we must have $Im\,(I+R)= Ker\, (I+R)$.

The  real subspace $(I+R)V_\RR$
is obviously not a complex subspace in $V_\CC$.
In order to determine the actual limit $\underset{t \rightarrow \infty}{\lim }\, (Id-ic(t))V_\RR
=\underset{t \rightarrow \infty}{\lim }\, \frac{1}{t}(Id-ic(t))V_\RR$ in $Gr(2n,V_\CC)$, 
we need to use that $c(t)$ acts by multiplication by $i$ on $(Id-ic(t))V_\RR$. As
$\underset{t \rightarrow \infty}{\lim }\,\frac{1}{t}c(t)(Id-ic(t))=(I+R)$, we have that
$\underset{t \rightarrow \infty}{\lim }\,(Id-ic(t))V_\RR=(I+R)V_\RR\oplus i(I+R)V_\RR$,
which is a complex vector subspace of complex dimension $2n$ in $V_\CC$, this subspace belongs to $\mathcal L_\RR\subset Gr(2n,V_\CC)$.

Next, replacing $R$ with a general reflection operator $R_1$ as above, we get the whole circle
of limit points in $\mathcal L_\RR$,
$$S^1=\{(I+\cos t \cdot R+\sin t \cdot IR)V_\RR \oplus i(I+\cos t \cdot R+\sin t \cdot IR)V_\RR\,|\, t\in[0,2\pi]\},$$
which can be considered as the points of (a connected component of) $S(I,R)$ at infinity (it is easy to check
that distinct values of $t\in[0,2\pi)$ indeed correspond to distinct points in $\mathcal L_\RR$).

Now let us apply the central symmetry $xI+yR+zIR\mapsto -xI-yR-zIR$ 
to the curve $c(t) \subset S(I,R)^+$ so as to get the curve $-c(t)=-tI-\sqrt{t^2-1}R \subset S(I,R)^-$. 
 Then, similarly to the previous, $\underset{t \rightarrow \infty}{\lim }\, \frac{1}{t}(Id+ic(t))V_\RR
=(I+R)V_\RR\oplus i(I+R)V_\RR$, that is, we get the same point in $S^1$ as for the previously considered
component of $S(I,R)$. This means that $\overline{S(I,R)}$ is connected (the two sheets of the hyperboloid glue together) and $S^1=\overline{S(I,R)}\cap \mathcal L_\RR$.

\subsection{The tangent cone at infinity: a simple example} 
Our goal is to prove the smoothness of the closure $\overline{S(I,R)}$ at points 
$p\in \overline{S(I,R)}\cap \mathcal L_\RR$. The study of the tangent cone
$TC_p\overline{S(I,R)}$ is done via evaluating tangent vectors at $p$ to  curves through $p$,
and, most conveniently, such curves are obtained by ``reversing'' the general curves 
$C(t)\subset Gr(2n,V_CC)$ with $\underset{t\to \infty}{\lim}C(t)=p$ so that we consider
them as starting at $p$. 



Let us first demonstrate this approach for  the simplest version of such a curve $C$, namely the one from the previous
 subsection. Of course, this curve will provide just one of vectors in $TC_p\overline{S(I,R)}$, but it will
then be  clear how to proceed with the most general curves. 
 
Replacing $t$ with $\frac{1}{t}$ 
we can rewrite the  ``reversed version'' of our curve $C$ as $$\widetilde{C}\colon t \mapsto \left(Id-ic \left (\frac{1}{t}\right)\right)V_\RR=
\left(Id-\frac{i}{t}\left( I+\sqrt{1-t^2}R\right)\right)V_\RR=$$ $$=\left(t\cdot Id-i
\left( I+\sqrt{1-t^2}R\right)\right)V_\RR, 0<t\leqslant 1.$$
again, we scale the curve of operators so that the limit  of this curve in $End\,V_\CC$ exists as $t\to 0$.
Then the tangent vector $\widetilde{C}^{\prime}(0) \in Hom(p,V_\CC/p)$ is found as follows.  
Let $U \subset V_\RR$ be a vector subspace such that $V_\RR=Ker\, (I+R) \oplus U$.
Then for every $v \in (I+R)V_\RR$ there is a unique $u\in U$ such that $v=(I+R)u$ and clearly 
$v=\underset{t\to 0}{\lim} \,i(t\cdot Id-i( I+\sqrt{1-t^2}R))u$, where
$i(t\cdot Id-i( I+\sqrt{1-t^2}R))u \in \widetilde{C}(t)$.
We launch a curve $\Phi_t\in Hom(p, \widetilde{C}(t))\subset Hom(p, V_\CC)$  sending $v\in (I+R)V_\RR\subset p$ to 
$$\Phi_t(v)=i(t\cdot Id-i( I+\sqrt{1-t^2}R))u \in \widetilde{C}(t)$$
and $iv\in i(I+R)V_\RR \subset p$ to 
$$\Phi_t(iv)=-(t\cdot Id-i( I+\sqrt{1-t^2}R))u \in \widetilde{C}(t),$$ $\Phi_0$ is the identity embedding
$p\hookrightarrow V_\CC$.  Let us differentiate $\Phi_t$ at $t=0$
and naturally descend the obtained homomorphism $\varphi=\left.\frac{d\Phi_t}{dt}\right|_{t=0} \in Hom(p,V_\CC)$ to a homomorphism $\widetilde{\varphi}$  in $Hom(p,V_\CC/p)\cong T_pGr(2n,V_\CC)$. 
The homomorphism $\varphi$ in $Hom (p, V_\CC)$
is the mapping 
$$v\in (I+R)V_\RR \mapsto \left.\frac{d}{dt}\left(i(t\cdot Id-i(I+\sqrt{1-t^2}R))u\right)\right|_{t=0}=iu,$$
$$iv\in i(I+R)V_\RR \mapsto \left.\frac{d}{dt}\left(-(t\cdot Id-i(I+\sqrt{1-t^2}R))u\right)\right|_{t=0}=-u.$$


We have $\varphi((I+R)V_\RR\oplus i(I+R)V_\RR)=iU\oplus U$ 
and $U\cap (I+R)V_\RR=\{0\}$,  
so that $\widetilde{\varphi}\in Hom(p,V_\CC/p)$ is a nonzero vector. 
In fact, the vector $\widetilde{\varphi}$
does not depend on the choice of our complement $U$: if $U_1$ is another complement,
 giving rise to the corresponding $\varphi_1$, 
then for $v\in (I+R)V_\RR$ we have  $\varphi_1(v)-\varphi(v)=i(u_1-u)\in iKer(I+R)=iIm\,(I+R)$
so that $\widetilde{\varphi}_1=\widetilde{\varphi}$ in $Hom(p,V_\CC/p)$. 
%

We claim, that thus defined $\widetilde{\varphi}$ does not belong to $TC_p\mathcal L_\RR$. Indeed, a vector
$\widetilde{\varphi} \in T_pGr(2n,V_\CC)$ belongs to $TC_p\mathcal L_\RR$ if and only if
$\varphi(p\cap V_\RR) \cap V_\RR\neq \{0\}$.
However, $\varphi(p\cap V_\RR)=iU$, so that  $\widetilde{\varphi} \notin TC_p\mathcal L_\RR$.

This proves that 
$\widetilde{C}^{\prime}(0)$, as calculated above, is correctly defined and does not belong to 
 $TC_p\mathcal L_\RR$. 
In the next subsection we will show that $TC_p\overline{S(I,R)}$ is actually a plane
$T_p\overline{S(I,R)}$ and that 
 $$T_p\overline{S(I,R)} \,\cap\, TC_p\mathcal L_\RR = T_pS^1,$$ where  $S^1$ is the circle at infinity that we defined earlier. 

\subsection{The tangent cone at infinity: the general case} 
Now we apply the ``reversion'' to more general curves $C$, which finally allows us to evaluate
the tangent cone $TC_p\overline{S(I,R)}$ and to show that it is actually a 2-plane in $T_pGr(2n,V_\CC)$,
thus proving that $\overline{S(I,R)}$ is smooth at every $p\in \overline{S(I,R)}\cap \mathcal L_\RR$. 

Let us fix $p\in \overline{S(I,R)}\cap \mathcal L_\RR$. 
Again, up to a change of coordinates we may assume that $p=(I+R)V_\RR \oplus i(I+R)V_\RR$.  
As earlier, we define the curves $C(t)\subset Gr(2n,V_\CC)$ 
 with $\underset{t\to \infty}{\lim}C(t)=p$ 
in terms of the curves $c(t)=x(t)I+y(t)R+z(t)IR\subset S(I,R)^+$,
namely, as  the respective curves $t\mapsto C(t)=(Id- ic(t))V_\RR \subset Compl$ and calculate
the tangent vectors to the reversed versions of the latter at $p$.  

For a general  curve $c\subset S(I,R)$ as above we may assume that $x(t)=t$, $t\geqslant 1$,
so that the $c(\frac{1}{t})=\widetilde{x}(t)I+\widetilde{y}(t)R+\widetilde{z}(t)IR$, where now $0<t\leqslant 1$, 
 $$\widetilde{x}(t)=\frac{1}{t}, \,\widetilde{y}(t)=\sqrt{(\widetilde{x}(t))^2-1}\cos\, g(t), \,
\widetilde{z}(t)=\sqrt{(\widetilde{x}(t))^2-1}\sin\, g(t),$$
and we may assume that the  function $g(t)$ is differentiable at $t=0$ and  $g(0)=0$.
Set 
$b=g^{\prime}(0)$.
Then we get the reversed curve $\widetilde{C}(t)=(Id-ic(\frac{1}{t}))V_\RR$, 
$$ \widetilde{C}(t)=
\left(tId-i\left(I+\sqrt{1-t^2}\cos\, g(t)\cdot R+\sqrt{1-t^2}\sin\, g(t)\cdot IR\right)\right)V_\RR,$$ so that
this curve passes through our $p=(I+R)V_\RR\oplus i(I+R)V_\RR$ at $t=0$. 
Arguing as previously we get an element of $Hom(p,V_\CC)$,  
 $$\left.\frac{d}{dt}\widetilde{C}(t)\right|_{t=0}=
\left\{\begin{array}{cc}(I+R)u \mapsto (b\cdot IR+i Id)u \\ i(I+R)u \mapsto (-Id+b\cdot iIR)u 
\end{array}\right\}.$$
All such tangent vectors and their positive scalar multiples injectively project onto
 an open half-plane $P_+$ in $TC_p\overline{S(I,R)}\subset T_pGr(2n,V_\CC)=Hom(p,V_\CC/p)$, 
whose boundary in 
 $T_p\overline{S(I,R)}$ is
the image $\widetilde{l}$ of the line $$l=\RR\cdot \{(I+R)u \mapsto IRu, i(I+R)u \mapsto iIRu \} \subset Hom(p,V_\CC),$$
under the projection $Hom(p,V_\CC) \to Hom(p,V_\CC/p)$, this is precisely
the tangent line at $p$ to the circle $S^1=\overline{S(I,R)}\cap \mathcal L_\RR$. 
Considering now the 
curves
$t\mapsto (Id+ic(t))V_\RR$, which arise from the curves of operators $-c(t)\subset S(I,R)^-$, central-symmetric to 
the previously considered curves $c(t)$,  and evaluating tangent vectors at $t=0$ and their positive scalar multiples
for such curves in $Gr(2n,V_\CC)$, we get another open half-plane plane $P_-$ in $TC_p\overline{S(I,R)}$, with the same boundary line and
which is central-symmetric to the previously obtained one, $P_-=-P_+$.  Now it is clear that
the tangent cone $TC_p\overline{S(I,R)}=P_+\cup \widetilde{l} \cup P_-=$
$$\left\langle \left\{\begin{array}{cc}(I+R)u \mapsto IRu \\  i(I+R)u \mapsto iIRu\end{array}\right\}
+Hom(p,p),
\left\{\begin{array}{cc}(I+R)u \mapsto iu \\  i(I+R)u \mapsto -u\end{array}\right\}+Hom(p,p) \right\rangle,$$
is a 2-plane, so that $\overline{S(I,R)}$ is  smooth, in the differentiable sense, at the point $p$ in $\overline{S(I,R)}\cap \mathcal L_\RR$, so that we may write $TC_p\overline{S(I,R)}=T_p\overline{S(I,R)}$.  It is easy to see, analogously to how it was done above,
that there are no vectors in $P^+,P^-$ that belong to $TC_p\mathcal L_\RR$, and clearly
$\widetilde{l}\subset TC_p\mathcal L_\RR$, so that we get the desired equality
$T_p\overline{S(I,R)} \,\cap\, TC_p\mathcal L_\RR = T_pS^1$. 
Similarly one shows smoothness and this intersection equality at all other points in  $\overline{S(I,R)}\cap \mathcal L_\RR$.

As the tangent plane $T_p\overline{S(I,R)}$ is the limit of tangent planes $T_qS(I,R)$ 
that are all invariant
under the  complex structure operator on $TCompl\subset TGr(2n,V_\CC)$, we conclude that $T_p\overline{S(I,R)}$
is also invariant under the complex structure operator, thus $\overline{S(I,R)}$
is indeed a smooth complex-analytic manifold.
This completes the proof of the statement of the Theorem regarding $S(I,R)$. 

\vspace*{0.5
cm}
\hspace*{2cm}
\input{S-I-R-corr-2.pic}
\vspace*{-11.5cm}
\begin{center}
Picture 1: components $S(I,R)^{\pm}$ glue into a sphere, intersecting $\mathcal L_\RR$ in a circle.
\end{center}
\bigskip
{\it The case of $S(I,N)$.}

\subsection{Points at infinity} As $N^2=0$ we certainly know that $Im\, N \subset Ker\, N$, so that $\dim\,Im\, N \leqslant \dim\, Ker\, N$, and, as $\dim\,Im\,N+\dim\,Ker\,N=\dim\,V_\RR=4n$,
we must have $\rank\, N \leqslant 2n$, and in general it is possible to have a
strict inequality.

Now fixing $\alpha,\beta \in \RR$ and introducing $c(y)=I+\alpha yN+\beta y IN \in S(I,N)^+$, 
we have the limit
$p_+=\underset{y\rightarrow \infty}{\lim}(Id-ic(y))V_\RR$, which is a (complex) $2n$-subspace in $V_\CC$,
containing the (possibly proper) complex subspace
$$(\alpha+\beta I)NV_\RR\oplus i (\alpha +\beta I)NV_\RR \subset p_+ \in \mathcal L_\RR,$$
where $\alpha$ acts as $\alpha Id$.
Besides that, if we look at the vector subspace $Ker\, N \subset V_\RR$, we 
can see that $(Id-ic(y))Ker\,N=(Id-iI)Ker\, N\subset V_\CC$, so that it is the same complex subspace in $V_\CC$
for all points $C(y)=(Id-ic(y))V_\RR$ of our curve in $Gr(2n,V_\CC)$.
Now given that $(\alpha +\beta I)N=N(\alpha -\beta I)$ and that for $\alpha, \beta$ not both zero
we have that $\alpha\pm\beta I$ is an invertible operator, and 
$(\alpha +\beta I)NV_\RR=N(\alpha -\beta I)V_\RR=NV_\RR$. Making analogous
calculations for  $S(I,N)^-$ with $p_-=\underset{y\rightarrow \infty}{\lim}(Id+ic(y))V_\RR$ we can finally write our $p_+$ 
and $p_-$ as
$$p_\pm=(Im\,N \oplus i\cdot  Im\,N) \oplus (Id\mp iI)Ker\, N.$$


So we see that $\overline{S(I,N)}$ consists of two connected components and 
$\overline{S(I,N)} \cap \mathcal L_\RR=\{p_+,p_-\}$.

\subsection{Tangent cone at infinity: reversing the curves} Again, launching the curve $C(y)$ from the point $p_+$ at infinity and setting $y=\frac{1}{t}$
we get the curve $$\widetilde{C}(t)=C\left(\frac{1}{t}\right)=(t\cdot Id-i(t\cdot I+\alpha N+ \beta IN))V_\RR.$$
In order to find the tangent vector to such a curve at $p_+$ ($t=0$) 
as a homomorphism $\widetilde{\varphi}\in Hom(p_+,V_\CC/p_+)$
like in the  case of $S(I,R)$, we need to lift the curve $\widetilde{C}(t)$ to a curve $\Phi_t \in Hom(p_+,
\widetilde{C}(t))\subset Hom(p_+,V_\CC)$.
For that note that $Ker\,N \subset V_\RR$ is $I$-invariant and let us choose an $I$-invariant subspace $U\subset V_\RR$ such that $V_\RR=Ker\,N\oplus U$. Then
$N(\alpha -\beta I)\colon U \rightarrow Im\,N$ is an isomorphism and so for every $v \in Im\,N$ there exists
a unique $u\in U$ such that $v=N(\alpha -\beta I)u$.

Next, define $$\Phi_t(v)=i(t\cdot Id-i(t\cdot I+\alpha N+ \beta IN))u,$$
$$\Phi_t(iv)=-(t\cdot Id-i(t\cdot I+\alpha N+ \beta IN))u$$ for $v\in Im\,N$,
and $\Phi_t(v)=v$ for  $v \in (Id-iI)Ker\,N$. Then for $v\in Im\,N$ we have 
$\Phi_0(v)=(\alpha + \beta I)Nu=N(\alpha -\beta I)u=v$, so that $\Phi_0$
is the identity embedding $p_+\hookrightarrow V_\CC$.

\subsection{Tangent cone at infinity: evaluating the tangent vectors} Differentiating $\Phi_t$ at $t=0$ we get the vector $\varphi_{\alpha+i\beta} \in Hom(p_+,V_\CC)$ defined by 
$$v \mapsto i(Id-iI)u, iv \mapsto (-Id+iI)u \mbox{ for $v \in Im\, N$, } $$
$$(Id-iI)v \mapsto 0 
\mbox{ for $v \in Ker\, N$, }$$
 which naturally projects to a nonzero vector $\widetilde{\varphi}_{\alpha+i\beta} \in Hom(p_+,V_\CC/p_+)$. This vector is easily seen not to belong to $TC_{p_+}\mathcal L_\RR$. 
It is easy to check that the vectors $\varphi_{z_1},\varphi_{z_2}, z_1,z_2\in\CC\setminus\{0\},z_1\neq-z_2$, defined with respect to the same
$U$, satisfy 
\begin{equation}
\label{Addition-law}
\varphi_{z_1}+\varphi_{z_2}=\varphi_{\frac{z_1z_2}{z_1+z_2}}\mbox{ and }
a\varphi_z=\varphi_{\frac{z}{a}},z\in\CC\setminus\{0\},a\in\RR\setminus\{0\},
\end{equation}
 so that for an arbitrary $\alpha+i\beta \in \CC$ we have $\varphi_{\alpha+i\beta}=(\alpha^2+\beta^2)(\alpha\varphi_1+\beta\varphi_i)$.
Indeed, let us identify the subalgebra $\RR Id\oplus \RR\cdot I\subset End\,V_\RR$ with $\CC$ in an obvious way.  If $v=z_1Nu_1$ and $v=z_2Nu_2$ for $z_1,z_2\in \CC$, then $N(u_1+u_2)=\frac{1}{z_1}v+\frac{1}{z_2}v
=\frac{z_1+z_2}{z_1z_2}v$, so that $v=\frac{z_1z_2}{z_1+z_2}N(u_1+u_2)$ and this proves
the first of two equalities in (\ref{Addition-law}), the second equality is even easier.  

This tells us that our choice of $U$  provides a 2-plane of vectors $\varphi_z$ (and the zero vector) in $Hom(p_+,V_\CC)$. This plane injectively projects to a 2-plane of vectors $\widetilde{\varphi}_{\alpha+i\beta} \in Hom(p_+,V_\CC/p_+)\cong T_{p_+}Gr(2n,V_\CC)$. 
Again, similarly to the previous case,  these vectors $\widetilde{\varphi}_{\alpha+i\beta}$ do not
depend on the choice of the $I$-invariant complement $U$. 
Thus $TC_{p^+}\overline{S(I,N)}$ is actually a 2-plane $T_{p_+}\overline{S(I,N)}$ and the
 intersection
$T_{p_+}\overline{S(I,N)}\cap TC_
{p_+}\mathcal L_\RR$ is zero. 

As $\dim_\RR \mathcal L_\RR=8n^2-1$, that is, it is a codimension 1 locus in $Compl$,
the tangent cone $TC_{p_+}\mathcal L_\RR$, intersecting the 2-plane $T_{p_+}\overline{S(I,N)}$
only at zero, cannot be an $8n^2-1$-dimensional vector subspace in the $8n^2$-dimensional
vector space $T_{p_+}Compl$. This means that $p_+$ is a singular point of $\mathcal L_\RR$. 
All the above observations are also similarly true for  $p_-$.

\vspace*{0.5cm}
\hspace*{1.5cm}
\input{S-I-N-corr-3.pic}
\vspace*{-9.5cm}
\begin{center}
Picture 2: Each component of the closure $\overline{S(I,N)}$ has precisely one point in $\mathcal L_\RR$.
\end{center}

Now the proof of the theorem is complete.

\section{Proof of Theorem \ref{Kahler-theorem}}

\subsection{} Further we will be using period matrices for verifying Riemann's bilinear relations, 
so we briefly recall the definition. Let $V$ be a $2n$-dimensional complex space,
$V\cong V_\RR$ over $\RR$, and let $I\colon V_\RR\rightarrow V_\RR$
corresponds to scalar multiplication by $i$ on $V$. Here instead of varying $I$
we will be varying the lattice $\Gamma=\langle \gamma_1,\dots,\gamma_{4n}\rangle \subset V_\RR$
determining our torus $V_\RR/\Gamma$. The period matrix, corresponding
to the complex torus $V_\RR/\Gamma$ with the complex structure $I$ 
is the matrix $(\gamma_1,\dots,\gamma_{4n})$ of vectors $\gamma_j\in V,j=1,\dots,4n$.
Choosing a basis $e_1,\dots,e_{2n}$ of $V$ allows us to write the period matrix as 
a $2n\times 4n$ matrix with complex entries. Next, assuming that
$\gamma_1,\dots, \gamma_{2n}$ are linearly  independent over $\CC$
and setting $e_1=\gamma_1,\dots,e_{2n}=\gamma_{2n}$ we can write our period matrix
in the form $(\mathbb{1}_{2n}|Z)$, where $\mathbb{1}_{2n}$ is the $2n\times 2n$-identity
matrix and $Z$ a $2n\times 2n$ matrix with complex entries. The matrix $(\mathbb{1}_{2n}|Z)$
is called a {\it normalized period matrix}. Let $I_Z$ be the matrix of the operator $I$
in the basis $\gamma_1,\dots,\gamma_{4n}$. Then we have the relation, coming from 
the definition of $I$, $$(\mathbb{1}_{2n}|Z)I_Z=(i\mathbb{1}_{2n}|iZ).$$ 

\subsection{} Next, for an alternating form $Q\in Hom(\wedge^2V_\RR,\RR)$ the first 
Riemann bilinear relation expresses a condition on $Z$, under which
$Q$ determines a class in $H^{1,1}(V_\RR/\Gamma,\RR)$ for the respective
lattice $\Gamma$. The second bilinear relation is a condition on $Z$,
under which $Q$ determines  a K\"ahler class in $H^{1,1}(V_\RR/\Gamma,\RR)$,
that is, $Q$ determines a positive hermitian form on $V$. We 
refer to  \cite[Ch. 2.6]{GrHarr} for the notations that we will use further.

\subsection{}
\label{Subsection-action} The natural action of $GL(V_\RR)$ on $V_\RR$ induces actions
on the alternating forms (and thus on second cohomology classes) and periods that agree: 
a form $Q$ determines a (positive) class in $H^{1,1}(V_\RR/\Gamma,\RR)$
if and only if $g^*Q=g^tQg$ determines a (positive) class in $H^{1,1}(V_\RR/
g^{-1}(\Gamma),\RR)$, $I_{Zg^{-1}}=gI_{Z}g^{-1}$, where we, by a slight abuse of notations,
identify the operator $g$ and its matrix in the fixed basis $\Gamma$. 

\medskip

{\it Case of a compact $S$.} All representations $\HH(-1)=\HH \rightarrow End\,V_\RR$
are equivalent and are direct sums of the unique irreducible 4-representations of $\HH$.
This implies the transitivity of the $GL(V_\RR)$-action on twistor lines of the type $\HH(-1)$
in $Compl$, discussed in \cite{Twistor-lines}. 

Now, the transitivity of the action on twistor lines and the remark made in subsection \ref{Subsection-action}
reduce the problem of counting the dimension of $Hdg_S$ and proving the absence
of K\"ahler classes in $Hdg_S$ to doing so just for some (arbitrary) particular twistor line $S\subset Compl$.  

So let us now choose a convenient twistor line $S$. 
Let $S=S(I,J)$ for anticommuting complex structures $I,J$, with matrices in the basis
$\Gamma$ as follows, 
\[I=
\left(\begin{array}{cc}
\mathbb{0}_{2n} & -\mathbb{1}_{2n}\\
\mathbb{1}_{2n} & \mathbb{0}_{2n}
\end{array}\right) \mbox{ and } J=
\left(\begin{array}{c|c}
I_{2n} & \mathbb{0}_{2n}\\
\hline
\mathbb{0}_{2n} & -I_{2n}\\
\end{array}\right), \mbox{ where }
I_{2n}=\left(\begin{array}{cc}
\mathbb{0}_{n} & -\mathbb{1}_{n}\\
\mathbb{1}_{n} & \mathbb{0}_{n}
\end{array}\right),
\]
$\mathbb{1}_k$ is the $k\times k$ identity matrix and $\mathbb{0}_k$ is 
the $k\times k$ zero matrix. 
Then, setting $K=IJ$,  we write the matrix of an arbitrary complex structure $\lambda=aI+bJ+cK \in S(I,J)$
in our basis   
as \[\lambda=
\left(\begin{array}{cc}
bI_{2n} & -a\mathbb{1}_{2n}+cI_{2n}\\
a\mathbb{1}_{2n}+cI_{2n} & -bI_{2n}
\end{array}\right), a^2+b^2+c^2=1.\]
For such a choice of basis we clearly have $\lambda^t=\lambda^{-1}=-\lambda$. 
The matrices $Q$ corresponding to the classes in $Hdg_S\subset H^{1,1}(V_\RR/\Gamma,\RR)$ 
must satisfy $\lambda^t Q \lambda=Q$ for all $\lambda \in S$,
or, what is the same, they must satisfy $QI=IQ, QJ=JQ$. Such $Q$'s will automatically satisfy the
first Riemann bilinear relation. 

Let \[Q=\left(\begin{array}{cc}
A & B \\
-B^t & D
\end{array}\right),\]
where $A,D$ are skew-symmetric $2n\times 2n$-matrices and $B$ an arbitrary $2n \times 2n$-matrix.
Then the commutation relation $QI=IQ$ means that $D=A$ and $B^t=B$. The commutation relation
$QJ=JQ$ means that if we write $A=
\left(\begin{array}{cc}
A_1 & A_2 \\
-A_2^t & A_4
\end{array}\right), \mbox{ where } A_1^t=-A_1,A_4^t=-A_4, B=\left(\begin{array}{cc}
B_1 & B_2 \\
B_2^t & B_4
\end{array}\right), \mbox{ where } B_1^t=B_1,B_4^t=B_4$, then as
$$JQ=\left(\begin{array}{c|c}
I_{2n} & \mathbb{0}_{2n}\\
\hline
\mathbb{0}_{2n} & -I_{2n}\\
\end{array}\right)\cdot \left(\begin{array}{cccc}
A_1 & A_2 & B_1 & B_2\\
-A_2^t & A_4 & B_2^t & B_4\\
-B_1 & -B_2 & A_1 & A_2\\
-B_2^t & -B_4 & -A_2^t & A_4
\end{array}\right)=\left(\begin{array}{cccc}
A_2^t & -A_4 & -B_2^t & -B_4\\
A_1 & A_2 & B_1 & B_2\\
-B_2^t & -B_4 & -A_2^t & A_4\\
B_1 & B_2 & -A_1 & -A_2
\end{array}\right),$$
and
$$QJ=
\left(\begin{array}{cccc}
A_1 & A_2 & B_1 & B_2\\
-A_2^t & A_4 & B_2^t & B_4\\
-B_1 & -B_2 & A_1 & A_2\\
-B_2^t & -B_4 & -A_2^t & A_4
\end{array}\right) \cdot \left(\begin{array}{c|c}
I_{2n} & \mathbb{0}_{2n}\\
\hline
\mathbb{0}_{2n} & -I_{2n}\\
\end{array}\right)=\left(\begin{array}{cccc}
A_2 & -A_1 & -B_2 & B_1\\
A_4 & A_2^t & -B_4 & B_2^t\\
-B_2 & B_1 & -A_2 & A_1\\
-B_4 & B_2^t & -A_4 & -A_2^t
\end{array}\right),$$
we have that $A_2^t=A_2, A_4=A_1, B_2^t=B_2, B_4=-B_1$.
Here we were able to get around without writing explicitly the first Riemann bilinear relation in the classical form, but in order to check the second one we need to write down the period matrix for a 
general point $\lambda \in S$.

The dimension of the space of matrices $Q$ satisfying the first bilinear relation is the 
sum of the dimensions of spaces of skew-symmetric $n\times n$-matrices $A_1$,
of symmetric $n\times n$-matrices $A_2,B_1,B_2$, that is $\frac{n(n-1)}{2}+3\cdot \frac{n(n+1)}{2}=
2n^2+n$. 
This is the dimension of the subspace $Hdg_{S(I,J)}$ of
classes  in $H^{1,1}(V_\RR/\Gamma,\RR)$ which stay of type $(1,1)$
along $S(I,J)$. 

Let us now find the respective normalized period matrices $\Omega=(\mathbb{1}_{2n}|Z)$
(not to be mixed with the 
notation $\Omega$ for the classes in $H^{1,1}(V_\RR/\Gamma,\RR)$ used in the introduction!) for the points of $S$,
$$\left(\mathbb{1}_{2n}|Z\right)\cdot \lambda =\left(i\mathbb{1}_{2n}|iZ\right),$$
so that
\[
\left(\begin{array}{cc}
\mathbb{0}_{n} & -b\mathbb{1}_{n}\\
b\mathbb{1}_{n} & \mathbb{0}_{n}
\end{array}\right)+Z\cdot
\left(\begin{array}{cc}
a\mathbb{1}_{n} & -c\mathbb{1}_{n}\\
c\mathbb{1}_{n} & a\mathbb{1}_{n}
\end{array}\right)=i\mathbb{1}_{2n},
\]
which gives 
$$Z=\frac{1}{a^2+c^2}\left(\begin{array}{cc}
(-bc+ai)\mathbb{1}_{n} & (ab+ci)\mathbb{1}_{n}\\
-(ab+ci)\mathbb{1}_{n} & (-bc+ai)\mathbb{1}_{n}
\end{array}\right).$$
The period matrices $\Omega=(\mathbb{1}_{2n}|Z)$ thus provide
an affine chart containing all of $S(I,J)$ except $\pm J$ (the points of $S$ for which  $a^2+c^2=0$).
Now following the Riemann bilinear notations in \cite[Ch. 2.6]{GrHarr}
we set $\widetilde{\Omega}=\left(\begin{array}{c} \Omega \\ \overline{\Omega}\end{array}\right)$,
where $\overline{\Omega}$ is obtained by complex conjugation of the entries of $\Omega$,
and we define the $4n\times 2n$-matrix $\widetilde{\Pi}=(\Pi ,\overline{\Pi})$, such that
$\widetilde{\Omega}\cdot \widetilde{\Pi}=\mathbb{1}_{4n}$. Then setting $\Pi=\left(\begin{array}{c} E \\ G\end{array}\right)$ we get the defining equations for $E$ and $G$,
$$E+ZG=\mathbb{1}_{2n}, \overline{E}+Z\overline{G}=\mathbb{0}_{2n}.$$
Subtracting from the first the complex conjugate of the second
we get that $(Z-\overline{Z})G=\mathbb{1}_{2n}$, and so 
\[Z-\overline{Z}=2Im\,Z=\frac{2i}{a^2+c^2}\left(\begin{array}{cc} 
a\mathbb{1}_{n} & c\mathbb{1}_{n} \\ -c\mathbb{1}_{n} & a\mathbb{1}_{n} \end{array}\right),
G=\frac{-i}{2}\left(\begin{array}{cc} 
a\mathbb{1}_{n} & -c\mathbb{1}_{n} \\ c\mathbb{1}_{n} & a\mathbb{1}_{n} \end{array}\right).
\]
and then $$E=\mathbb{1}_{2n}-\frac{1}{a^2+c^2}\left(\begin{array}{cc}
(-bc+ai)\mathbb{1}_{n} & (ab+ci)\mathbb{1}_{n}\\
-(ab+ci)\mathbb{1}_{n} & (-bc+ai)\mathbb{1}_{n}
\end{array}\right)\cdot \frac{-i}{2}\left(\begin{array}{cc} 
a\mathbb{1}_{n} & -c\mathbb{1}_{n} \\ c\mathbb{1}_{n} & a\mathbb{1}_{n} \end{array}\right)=$$
$$=\frac{1}{2}\left(\begin{array}{cc} 
\mathbb{1}_{n} & ib\mathbb{1}_{n} \\ -ib\mathbb{1}_{n} & \mathbb{1}_{n} \end{array}\right).$$

Now we are ready to verify if  the second Riemann bilinear relation $-i\Pi^tQ\overline{\Pi}>0$ holds. 
Scaling for convenience,  we write (where we replace entries of the kind $x\mathbb{1}_{n}$ with just $x$ to keep
the formulas compact)
$4\Pi^tQ\overline{\Pi}=$
\[
\left(\begin{array}{cccc} 1 & -bi & -ai & -ci\\
bi & 1 & ci & -ai\end{array}\right)
\left(\begin{array}{cccc}
A_1 & A_2 & B_1 & B_2\\
-A_2 & A_1 & B_2 & -B_1\\
-B_1 & -B_2 & A_1 & A_2\\
-B_2 & B_1 & -A_2 & A_1
\end{array}\right)
\left(\begin{array}{cc} 1 & -bi \\
 bi & 1 \\
ai & -ci \\
ci & ai \end{array}\right)=
\]
\[\small
\left(\begin{array}{cccc} 1 & -bi & -ai & -ci\\
bi & 1 & ci & -ai\end{array}\right)
\left(\begin{array}{cc} 
A_1+i(bA_2+aB_1+cB_2) & A_2+i(-bA_1-cB_1+aB_2) \\
 -A_2+i(bA_1+aB_2-cB_1) & A_1+i(bA_2-cB_2-aB_1) \\
-B_1+i(-bB_2+aA_1+cA_2) & -B_2+i(bB_1-cA_1+aA_2) \\
-B_2+i(bB_1-aA_2+cA_1) & B_1+i(bB_2+cA_2+aA_1) \end{array}\right)=
\]
\[=2
\left(\begin{array}{cc} 
A_1+i(bA_2+aB_1+cB_2) & A_2+i(-bA_1-cB_1+aB_2) \\
 -A_2+i(bA_1-cB_1+aB_2) & A_1+i(bA_2-aB_1-cB_2) \\
 \end{array}\right).
\]

Now let us consider the upper-left $2n\times 2n$ block on the diagonal of the hermitian matrix $-i\Pi^tQ\overline{\Pi}$, 
which is the hermitian matrix $\frac{1}{2}((bA_2+aB_1+cB_2)-iA_1)$. This matrix is positively or negatively
definite if and only if its complex conjugate is such, so that if it is definite, then the real part
$\frac{1}{2}(bA_2+aB_1+cB_2)$ is definite as well. For every $\lambda=aI+bJ+cK\in S(I,J)$
we have that $-\lambda=-aI-bJ-cK \in S$, and the matrices $bA_2+aB_1+cB_2$
and $-bA_2-aB_1-cB_2$
cannot be both positively (or negatively) definite. 
This means that among classes in $Hdg_{S(I,J)}$ there are no  K\"ahler classes.
 Thus, for a K\"ahler class $\Omega\in H^{1,1}(V_\RR/\Gamma,\RR)$  the complex submanifold $Compl_\Omega \subset Compl$ 
does not contain $S$, and so $Compl_\Omega$ and $S$ may only have  finitely many points in common.

\medskip

{\it Case of $S=S(I,R)$.} Like in the previous case, we first establish the transitivity
of the $GL(V_\RR)$-action on twistor lines of the type $\HH(1)$, which, together
with \ref{Subsection-action}, will then allow us to calculate $\dim\, Hdg_S$ and 
determine the K\"ahler classes in $ Hdg_S$ for a particular convenient line $S=S(I,R)$,
 extending then these results via the $GL(V_\RR)$-action on all lines of the type $\HH(1)$.

The anticommutation $IR=-RI$ tells us that
$I$ establishes an isomorphism between the eigenspaces $Ker\, (R-Id)$
and $Ker\, (R+Id)$ of the operator $R$. We have the decomposition $V_\RR=Ker\, (R-Id)\oplus Ker\, (R+Id)$.  
Let $v_1,\dots, v_{2n}$ be a basis of $Ker\, (R-Id)$,
then $Iv_1,\dots, Iv_{2n}$ is a basis of $Ker\, (R+Id)$. Then
we can write  the matrices of $I$
and $R$
in the basis  $v_1,\dots, v_{2n},Iv_1,\dots, Iv_{2n}$, 
\[
I=\left(\begin{array}{cc}
\mathbb{0}_{2n} & \mathbb{-1}_{2n}\\
\mathbb{1}_{2n} & \mathbb{0}_{2n}\\
\end{array}\right),
R=\left(\begin{array}{cc}
\mathbb{1}_{2n} & \mathbb{0}_{2n}\\
\mathbb{0}_{2n} & \mathbb{-1}_{2n}\\
\end{array}\right).
\]
This shows that all representations of the real 4-dimensional algebra $\HH(1)=\langle i,r|i^2=-1, r^2=1, ir+ri=0\rangle \rightarrow End\, V_\RR$ 
are equivalent, so that the group $GL(V_\RR)$ acts transitively on all twistor lines 
$S(I,R) \subset Compl \subset  End\,V_\RR$. In fact the decomposition 
$V_\RR=\langle v_1+Iv_1,v_1-Iv_1\rangle \oplus \dots \oplus \langle v_{2n}+Iv_{2n},v_{2n}-Iv_{2n}\rangle$ breaks down the natural representation $i \mapsto I,r \mapsto R \in End\, V_\RR$
into the sum of irreducible 2-representations, and any two 2-representations of our algebra are isomorphic
and faithful.

We now set $\Gamma=\langle v_1,\dots, v_{2n},Iv_1,\dots, Iv_{2n} \rangle$
and consider the particular line $S=S(I,R)$. 
Like in the previous case, an alternating form $Q$ determines a class in $Hdg_{S(I,R)}$ or,
which is the same, the first bilinear relation for $Q$ holds over $S(I,R)$, if and only if the relations $\lambda^tQ\lambda=Q$ hold for all $\lambda \in S(I,R)$. 
Note that, unlike in the previous case, the second generator $R$ of our algebra
$\HH(1)$ is not a complex structure, $R \notin S(I,R)$, and in the current case 
the relations $\lambda^tQ\lambda=Q,
\lambda \in S(I,R)$, are equivalent to the relations $IQ=QI,RQ=-RQ$. Indeed, 
setting $\lambda=I$ we get $IQ=QI$ and setting $\lambda=\sqrt{2}I+R$
we get the relation $(\sqrt{2}I+R)^tQ(\sqrt{2}I+R)=Q$. The latter, given
that $(\sqrt{2}I+R)^t=-\sqrt{2}I+R=(\sqrt{2}I-R)^{-1}$, is equivalent to
$Q(\sqrt{2}I+R)=(\sqrt{2}I-R)Q$, which, together with the $I$-invariance of $Q$, 
implies that $QR=-RQ$. On the opposite, if $Q$ is $I$-invariant and $R$-antiinvariant,
then $Q$ is $R_1$-antiinvariant for every $R_1=bR+cIR$, where $b^2+c^2=1$, and hence $\lambda$-invariant for every $\lambda=aI+\sqrt{a^2-1}R_1 \in S(I,R)$. 

The relations $IQ=QI,RQ=-QR$
tell us that the matrix $Q$ is an arbitrary matrix of the form
\[Q=
\left(\begin{array}{cc}\mathbb{0}_{2n} & B\\
-B & \mathbb{0}_{2n} \end{array}\right),\]
where $B$ is any $2n\times 2n$ real symmetric matrix.

Now we need to write down the period matrices for the points $\lambda \in S(I,R)$
in order to formulate the second bilinear relation for $Q$. 
The matrix of the general $\lambda$ is given by
\[
S(I,R)\ni \lambda=aI+bR+cIR=
\left(\begin{array}{cc}
b\mathbb{1}_{2n} & (c-a)\mathbb{1}_{2n}\\
(a+c)\mathbb{1}_{2n} & -b\mathbb{1}_{2n}\\
\end{array}\right), a^2-b^2-c^2=1,
\]
and we write down the relations defining the period matrices
of points $\lambda \in S(I,R)$, setting as earlier $\Omega=(\mathbb{1}_{2n}|Z)$,
\[
(\mathbb{1}_{2n}|Z)\cdot 
\left(\begin{array}{cc}
b\mathbb{1}_{2n} & (c-a)\mathbb{1}_{2n}\\
(a+c)\mathbb{1}_{2n} & -b\mathbb{1}_{2n}\\
\end{array}\right)=(i\mathbb{1}_{2n}|iZ).
\]
This gives us $Z=\frac{-b+i}{a+c}\mathbb{1}_{2n}$, and as $a+c\neq 0$ for $a^2-b^2-c^2=1$,
our affine chart of matrices $\Omega$ contains all of $S(I,R)$.
As earlier we write $\widetilde{\Omega}=\left(\begin{array}{c} \Omega \\\overline{\Omega} \end{array}\right)$ and $\widetilde{\Pi}=(\Pi,\overline{\Pi})$ where $\widetilde{\Omega}\cdot \widetilde{\Pi}=\mathbb{1}_{4n}$. Setting $\Pi=\left(\begin{array}{c} E \\ G \end{array}\right)$ and denoting $u=\frac{-b+i}{a+c}$
we get 
\[
\widetilde{\Omega}\cdot \widetilde{\Pi}=
\left(\begin{array}{cc}
\mathbb{1}_{2n} & u\mathbb{1}_{2n}\\
\mathbb{1}_{2n} & \overline{u}\mathbb{1}_{2n}\\
\end{array}\right)\cdot
\left(\begin{array}{cc}
E & \overline{E}\\
G & \overline{G}\\
\end{array}\right)=\mathbb{1}_{4n}.
\]
Solving the latter matrix equation gives $G=\frac{1}{u-\overline{u}}\mathbb{1}_{2n}=-\frac{a+c}{2}i\mathbb{1}_{2n}$ and $E=\mathbb{1}_{2n}-uG=\mathbb{1}_{2n}-\frac{-b+i}{a+c}(-\frac{a+c}{2}i)
\mathbb{1}_{2n}=(1+\frac{-b+i}{2}i)\mathbb{1}_{2n}=\frac{1-bi}{2}\mathbb{1}_{2n}$.

Setting now $u_1=\frac{1-bi}{2},u_2=-\frac{a+c}{2}i$, we write the second bilinear relation as $-i\Pi^tQ\overline{\Pi}=$
\[
-i\left(u_1\mathbb{1}_{2n}, u_2\mathbb{1}_{2n}\right)
\left(\begin{array}{cc}\mathbb{0}_{2n} & B\\
-B & \mathbb{0}_{2n} \end{array}\right)
\left(\begin{array}{c}\overline{u_1}\mathbb{1}_{2n}\\ \overline{u_2}\mathbb{1}_{2n}\end{array}\right)=
-i(-u_2B,  u_1B)
\left(\begin{array}{c}\overline{u_1}\mathbb{1}_{2n}\\ \overline{u_2}\mathbb{1}_{2n}\end{array}\right)=
\]
\[=-i(u_1\overline{u_2}-\overline{u_1}u_2)B=-\frac{i}{4}((1-bi)(a+c)i-(1+bi)(-(a+c)i))B=\]
\[
=\frac{a+c}{4}((1-bi)+(1+bi))B=\frac{(a+c)}{2}B>0.\]
If we restrict to the upper sheet $a=\sqrt{1+b^2+c^2}$ of our hyperboloid, we get that $B$ can be any positive definite matrix, and if we consider the lower sheet $a=-\sqrt{1+b^2+c^2}$ then $B$ can be any negative definite matrix in order for the relation $-i\Pi^tQ\overline{\Pi}>0$ to hold.
\begin{rem}
Note that while $\overline{S(I,R)}\subset Gr(2n,V_\CC)$ is a connected curve, the components
$S(I,R)^+,S(I,R)^-\subset Compl$ do not share any K\"ahler class staying of type (1,1) along
both of them.
\end{rem}

The dimension of all matrices $Q$ satisfying the first bilinear relation is the dimension of 
symmetric $2n\times 2n$-matrices $B$, that is, $\frac{2n(2n+1)}{2}=n(2n+1)$.
This is the dimension of the subspace of
classes $Hdg_{S(I,R)}$ in $H^{1,1}(V_\RR/\Gamma,\RR)$ which stay of type $(1,1)$
along $S(I,R)$. The K\"ahler classes which stay of type $(1,1)$
along one of the components of $S(I,R)$ correspond to $\pm B>0$ and form a subset
in  $Hdg_{S(I,R)}$ having two connected components, each of which is
an open cone in $Hdg_{S(I,R)}$. 

\medskip

{\it Case of $S(I,N)$.} 
Again, we start with the representation theory of the real algebra 
$\HH(0)=\langle i,n|i^2=-1,n^2=0,in+ni=0\rangle$.

As $IN=-IN$, we have that both the kernel and the image of $N$, $Im\,N\subset Ker\, N \subset V_\RR$
are $I$-invariant subspaces. Next, choosing an $I$-invariant complement $U$ for $Ker\, N$ in $V_\RR$,
$V_\RR=Ker\, N \oplus U$, a basis $u_1,\dots, u_k,Iu_1,\dots, Iu_k$ of $U$, and an $I$-invariant
complement $W$ for $Im\,N$ in $Ker\,N$ together with its basis of the form $w_1,\dots, w_l,Iw_1,\dots,
Iw_l$, we get the basis $Nu_1,\dots, Nu_k,NIu_1,\dots, NIu_k$ of $Im\,N$, and
we get the basis $Nu_1,\dots, Nu_k,NIu_1,\dots, NIu_k, w_1,\dots, w_l,Iw_1,\dots,
Iw_l, u_1,\dots, u_k,Iu_1,\dots, Iu_k$ of $V_\RR=Im\,N\oplus W\oplus U$, where $4k+2l=dim\,V_\RR=4n$.

In this basis $I$ and $N$ have the matrices
\[
I=\left(\begin{array}{c|c|c}
\begin{array}{cc}
\mathbb{0}_k & \mathbb{1}_k\\
-\mathbb{1}_k & \mathbb{0}_k\\
\end{array} & \mathbb{0}_{2k\times 2l} & \mathbb{0}_{2k}\\
\hline
\mathbb{0}_{2l \times 2k} & \begin{array}{cc}
\mathbb{0}_l & -\mathbb{1}_l\\
\mathbb{1}_l & \mathbb{0}_l\\
\end{array} &\mathbb{0}_{2l\times 2k}\\
\hline
\mathbb{0}_{2k} & \mathbb{0}_{2k\times 2l} & \begin{array}{cc}
\mathbb{0}_k & -\mathbb{1}_k\\
\mathbb{1}_k & \mathbb{0}_k\\
\end{array}
\end{array}\right),
N=\left(\begin{array}{c|c|c}
\mathbb{0}_{2k}  & \mathbb{0}_{2k\times 2l} & \mathbb{1}_{2k} \\
\hline
\mathbb{0}_{2l \times 2k} & \mathbb{0}_{2l} &\mathbb{0}_{2l\times 2k}\\
\hline
\mathbb{0}_{2k} & \mathbb{0}_{2k\times 2l} & \mathbb{0}_{2k}
\end{array}\right),
\]
which proves that the every representation of our algebra is
reducible (but not completely reducible). All 
non-equivalent $4n$-representations are parametrized by the values of 
nonnegative integers $k,l$ satisfying $4k+2l=4n$, that is, 
as $l=2(n-k)$, we have that there are $n$ such $4n$-representations.

Now let $Q$ be a skew-symmetric $4n\times 4n$-matrix written in our basis.
The first bilinear relation for $Q$ with respect to the periods in $S(I,N)$
can be written as $$(I+aN+bIN)^tQ(I+aN+bIN)=Q \mbox{ for all } a,b \in\RR.$$
Expanding the left side as a (matrix-valued) polynomial in $a,b$, $(I+aN+bIN)^tQ(I+aN+bIN)=I^tQI+
a(N^tQI+I^tQN)+b((IN)^tQI+I^tQ(IN))+
a^2N^tQN+b^2(IN)^tQ(IN)
+ab(N^tQIN+(IN)^tQN)=Q$ and equating the coefficients of the monomials to zero, we get the defining equations for $Q$. In particular, we have $I^tQI=Q$, which allows us to rewrite the $a$-coefficient as
$N^tQI+I^tQN=N^tQI-QIN=N^tQI+QNI=(N^tQ+QN)I=0$, that is, $N^tQ+QN=0$.
Similarly, $b$-coefficient gives the same equality, $a^2$- and $b^2$-coefficients give
the same equality $N^tQN=0$, and $ab$-coefficient gives $N^tQIN+(IN)^tQN=N^t(QI+I^tQ)N=0$,
which holds automatically because we already  have $QI+I^tQ=0$. 

Next, let \[Q=
\left(\begin{array}{ccc}
A & B & C\\
-B^t & D & E \\
-C^t & -E^t & F
\end{array}\right),A^t=-A,D^t=-D,F^t=-F,
\]
where, as earlier, the blocks correspond to the decomposition $V_\RR=Im\,N\oplus W \oplus U$.
Let us start with the equality $QN+N^tQ=0$. Rewriting it as $QN=-N^tQ$
and evaluating both sides we get $QN=$
\[
\left(\begin{array}{ccc}
A & B & C\\
-B^t & D & E \\
-C^t & -E^t & F
\end{array}\right)
\left(\begin{array}{c|c|c}
\mathbb{0}_{2k}  & \mathbb{0}_{2k\times 2l} & \mathbb{1}_{2k} \\
\hline
\mathbb{0}_{2l \times 2k} & \mathbb{0}_{2l} &\mathbb{0}_{2l\times 2k}\\
\hline
\mathbb{0}_{2k} & \mathbb{0}_{2k\times 2l} & \mathbb{0}_{2k}
\end{array}\right)=
\left(\begin{array}{c|c|c}
\mathbb{0}_{2k}  & \mathbb{0}_{2k\times 2l} & A \\
\hline
\mathbb{0}_{2l \times 2k} & \mathbb{0}_{2l} & -B^t\\
\hline
\mathbb{0}_{2k} & \mathbb{0}_{2k\times 2l} & -C^t
\end{array}\right)
\]
and
$N^tQ=$
\[
\left(\begin{array}{c|c|c}
\mathbb{0}_{2k}  & \mathbb{0}_{2k\times 2l} & \mathbb{0}_{2k} \\
\hline
\mathbb{0}_{2l \times 2k} & \mathbb{0}_{2l} &\mathbb{0}_{2l\times 2k}\\
\hline
\mathbb{1}_{2k} & \mathbb{0}_{2k\times 2l} & \mathbb{0}_{2k}
\end{array}\right)
\left(\begin{array}{ccc}
A & B & C\\
-B^t & D & E \\
-C^t & -E^t & F
\end{array}\right)
=
\left(\begin{array}{c|c|c}
\mathbb{0}_{2k}  & \mathbb{0}_{2k\times 2l} & \mathbb{0}_{2k} \\
\hline
\mathbb{0}_{2l \times 2k} & \mathbb{0}_{2l} &\mathbb{0}_{2l\times 2k}\\
\hline
A & B & C
\end{array}\right),
\]
so that $QN=-N^tQ$ holds precisely when $A=0,B=0,C=C^t$.
For such $Q$ we automatically have $N^tQN=0$, so that 
what is left to check is $I^tQI=Q$, or, equivalently, $QI=IQ$.
Denoting the three blocks of $I$ on the diagonal by $I_1,I_2,-I_1$,
we get 

\[
IQ=
\left(\begin{array}{ccc}
\mathbb{0}_{2k} & \mathbb{0}_{2k\times 2l} & I_1C\\
\mathbb{0}_{2l\times 2k} & I_2D & I_2E \\
I_1C & I_1E^t & -I_1F
\end{array}\right),
QI=
\left(\begin{array}{ccc}
\mathbb{0}_{2k} & \mathbb{0}_{2k\times 2l} & -CI_1\\
\mathbb{0}_{2l\times 2k} & DI_2 & -EI_1 \\
-CI_1 & -E^tI_2 & -FI_1
\end{array}\right),
\]
so that $C$ is a symmetric $2k\times 2k$-matrix anticommuting with $I_1$,
$D$ is a skew-symmetric $2l\times 2l$-matrix commuting with $I_2$,
$F$ is a skew-symmetric $2k\times 2k$-matrix commuting with $I_1$
and $E$ is a $2l\times 2k$-matrix satisfying $I_2E=-EI_1$.
The vector spaces of such $C,D$ and $F$ have the respective dimensions
$k^2+k,l^2$ and $k^2$.
Writing $E=
\left(
\begin{array}{cc}
E_1 & E_2\\
E_3 & E_4
\end{array}
\right),$
where all blocks have the same size $l\times k$, the
equality $I_2E=-EI_1$ gives $E_4=E_1, E_3=-E_2$,
so that the dimension of such $E$'s is equal $2kl$. We recall that $4k+2l=4n$
so that $l=2(n-k)$.
Now the total dimension of the space of such matrices $Q$
is $k^2+k+(k^2+l^2+2kl)=k^2+k+(k+l)^2=k^2+k+(2n-k)^2$. This is the dimension of the subspace $Hdg_{S(I,N)}$ of
classes in $H^{1,1}(V_\RR/\Gamma,\RR)$ which stay of type $(1,1)$
along $S(I,N)$. 

For tori $V_\RR/\Gamma$ with periods $\lambda$ in (either of the connected components of) $S(I,N)$,  
the form $Q$ determines alternating $(1,1)$-forms $\omega_\lambda(\cdot,\cdot)$ on $(V_\RR,\lambda)$, and the hermitian forms $h_\lambda(\cdot,\cdot)=\omega_\lambda(\cdot,\lambda\cdot)-i\omega_\lambda(\cdot,\cdot)$, associated to forms $\omega_\lambda(\cdot,\cdot)$
(for the definitions see, for example, \cite[Ch. 3.1]{Voisin}), are all identically zero on 
 the (nonzero) $\lambda$-invariant subspace $Im\,N \subset V_\RR$, $\lambda \in S(I,N)$. 
%
Thus the hermitian forms $h_\lambda$, all sharing a nonzero isotropic subspace,  are  never positively definite,
so that, finally, there are no K\"ahler classes surviving along any of the connected components of $S(I,N)$.

\section{Proof of Theorem \ref{Connectivity-theorem}}
Let $T$ be an element of $End\, V_\RR$, set  $G_{T}$ to be the adjoint action stabilizer of $T$ in $G=GL(V_\RR)$,
$G_T=\{g\in G\,|\,gTg^{-1}=g\}$. For $\HH(\varepsilon) \subset End\,V_\RR$ we set 
$G_{\HH(\varepsilon)}\subset G$ to be the adjoint action pointwise stabilizer of $\HH(\varepsilon)$.

We prove Theorem \ref{Connectivity-theorem} here  using the general form
of transversality formulated in Proposition \ref{Proposition-general-transversality} below,
analogously to how the $\HH(-1)$-connectivity was proved in \cite{Twistor-lines}.
On our way to Proposition \ref{Proposition-general-transversality} we will need
the following lemma.

\begin{lem}
\label{Two-generation}
Let $\varepsilon$ be any of -1,0,1.
The algebra $\HH(\varepsilon)$ is generated, as a real algebra, by any two  linearly independent
 elements from the subspace $\langle i,j,ij \rangle \subset \HH(\varepsilon)$.
\end{lem}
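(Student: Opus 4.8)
The plan is to treat the three cases $\varepsilon=-1,0,1$ uniformly by exploiting the bilinear form $q(u,v)=\tfrac12(uv+vu)\in\RR\cdot\mathrm{Id}$ on the $3$-dimensional subspace $W=\langle i,j,ij\rangle\subset\HH(\varepsilon)$, where I identify $\RR\cdot\mathrm{Id}$ with $\RR$. On $W$ this form has signature $(0,3)$, $(0,2)$ with a $1$-dimensional radical, or $(1,2)$ according to $\varepsilon=-1,0,1$ (in every case the restriction to $\langle i,ij\rangle$ is negative definite, and $j$ contributes $q(j,j)=\varepsilon$). First I would record the key structural fact: for $u,v\in W$ the product $uv\in\HH(\varepsilon)$ decomposes as $uv=q(u,v)\cdot\mathrm{Id}+\tfrac12[u,v]$, and the commutator $\tfrac12[u,v]$ again lies in $W$ (this is immediate from the multiplication table: $ij\in W$, $i(ij)=-\varepsilon\,? $ — more precisely $i\cdot ij=i^2 j=-j$, $j\cdot ij=-ji\cdot i=\dots$, all land in $\mathrm{span}(i,j,ij)$ up to scalars). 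Hence the subalgebra $R$ generated by two elements $u,v\in W$ is contained in $\RR\cdot\mathrm{Id}+W=\HH(\varepsilon)$, and it suffices to show $R=\HH(\varepsilon)$, i.e.\ that $\mathrm{Id}\in R$ and that $u,v$ together with their products span all of $W$.

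The core step is a dichotomy on whether the plane $P=\langle u,v\rangle_\RR\subseteq W$ is nondegenerate for $q$ or not. \emph{Case (a): $q|_P$ is nondegenerate.} Then I can run the orthogonalization process described in the introduction to replace the basis $u,v$ of $P$ by a $q$-orthogonal basis $u',v'$ of $P$ which are real-linear combinations of $u,v$, hence lie in $R$; orthogonality means $u'v'+v'u'=0$, so $u'v'=\tfrac12[u',v']\in W$. Since $q|_P$ is nondegenerate, $q(u',u')\neq0$, so $(u')^2=q(u',u')\mathrm{Id}\in R$ is a nonzero scalar, giving $\mathrm{Id}\in R$. It remains to see that $u',v',u'v'$ are linearly independent in $W$: $u'\perp v'$ and $u'v'\perp u',v'$ in $q$ (a short computation: $q(u'v',u')$ is a multiple of $\mathrm{Tr}$ of $u'v'u'+u'u'v'$, which vanishes by anticommutativity and $(u')^2\in\RR\,\mathrm{Id}$), and $u'v'\ne0$ since $u',v'$ are invertible or at least $u'v'=0$ would force $v'\in\ker$; actually one checks $(u'v')^2=-(u')^2(v')^2\ne0$, so $u'v'\ne0$. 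Thus $u',v',u'v'$ span a $3$-dimensional subspace of $W$, i.e.\ all of $W$, and together with $\mathrm{Id}$ they span $\HH(\varepsilon)$; since all these elements lie in $R$ we get $R=\HH(\varepsilon)$.

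\emph{Case (b): $q|_P$ is degenerate.} This can only happen when $\varepsilon\in\{0,1\}$, since $q|_W$ is definite when $\varepsilon=-1$. I would argue as follows. The radical of $q|_W$ is at most $1$-dimensional (it is $0$ for $\varepsilon=\pm1$ and $\RR\cdot(\text{the nilpotent }n)$ for $\varepsilon=0$). A degenerate plane $P$ meets this picture in a controlled way: either $P$ contains a $q$-isotropic vector $w$ with $w^2=0$ but $w\notin\mathrm{rad}(q|_W)$ (possible only for $\varepsilon=1$, where $q|_W$ is indefinite of signature $(1,2)$), or $\varepsilon=0$ and $P$ meets the radical. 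In the $\varepsilon=1$ subcase, $P$ degenerate with $q|_W$ nondegenerate forces $P=\langle w,w'\rangle$ with $w$ isotropic and $q(w,w')=0$ — but then $w'$ is also isotropic (two linearly independent isotropic vectors spanning a degenerate plane in a rank-$3$ form of signature $(1,2)$ is impossible unless the plane is totally isotropic, which a $(1,2)$ form does not admit in dimension $2$); contradiction, so Case (b) with $\varepsilon=1$ cannot occur and $P$ is automatically nondegenerate. For $\varepsilon=0$: if $P\ni n$ (the radical generator), write $P=\langle n,w\rangle$ with $w\in W$, $w\notin\RR n$; since $\langle i,ij\rangle$ is negative definite and $n\in\mathrm{span}(i,j,ij)$ with $n^2=0$, I can compute $nw+wn = 2q(n,w)\mathrm{Id}=0$ and $nw=\tfrac12[n,w]\in W$, and one checks $nw\neq 0$ and $w^2=q(w,w)\mathrm{Id}\ne0$ (as $q$ is definite on the complement of $\RR n$), so again $\mathrm{Id}\in R$ and $n,w,nw$ are independent in $W$ — hence span $W$ — and $R=\HH(0)$. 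The case $n\notin P$ but $P$ still degenerate cannot happen because $q$ restricted to any plane complementary to $\RR n$ is definite, hence nondegenerate.

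The step I expect to be the main obstacle is the bookkeeping in Case (b): one must be careful about exactly which degenerate planes can occur inside $\langle i,j,ij\rangle$ for $\varepsilon=0,1$ and verify in each that the two generators still produce a nonzero scalar (so that $\mathrm{Id}\in R$) and a full-rank triple in $W$. The cleanest way to handle this is probably to observe once and for all that for \emph{any} two linearly independent $u,v\in W$ one has $(uv-vu)\neq0$ in $W$ and $\{u,v,uv\}$ (or $\{u,v,vu\}$) is linearly independent in $W$ — this is a statement purely about the $3$-dimensional algebra $\HH(\varepsilon)$ and can be checked directly on the structure constants — and that $R$ contains a nonzero scalar because $u^2$ or $v^2$ or $(uv+vu)$ is a nonzero element of $\RR\cdot\mathrm{Id}$ (at least one of these is nonzero since $u,v$ independent and $q|_W$ has radical of dimension $\le1$). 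Granting these two elementary linear-algebra facts, $R\supseteq\RR\,\mathrm{Id}+W=\HH(\varepsilon)$ in all cases, completing the proof.
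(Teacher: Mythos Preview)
Your Case~(a) argument (nondegenerate plane) is fine and is essentially the paper's argument for $\varepsilon=\pm1$. The trouble is entirely in Case~(b), and it is fatal: the lemma as literally stated is \emph{false}, so no case analysis can close it.

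Two concrete errors. First, for $\varepsilon=1$ you claim a degenerate plane in $W=\langle i,j,ij\rangle$ cannot occur; it can. Take $u=i+j$ and $v=ij$: then $u^2=i^2+j^2=0$, $v^2=1$, and $uv+vu=0$, so $P=\langle u,v\rangle$ is degenerate. Computing $uv=(i+j)(ij)=-j-i=-u$ and $vu=u$ shows the subalgebra generated by $u,v$ is $\RR\cdot 1\oplus\RR u\oplus\RR v$, three-dimensional and not containing $i$. Second, for $\varepsilon=0$ you assert the radical of $q|_W$ is one-dimensional; it is $\langle j,ij\rangle$, two-dimensional (both $j$ and $ij$ square to $0$ and are $q$-orthogonal to $i$ and to each other). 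Taking $u=j$, $v=ij$ gives $u^2=v^2=uv=vu=0$, so the generated subalgebra is $\RR j\oplus\RR ij$, two-dimensional. Your fallback claim that $u,v,uv$ are always independent fails in both examples.

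The paper's proof has the same gap for $\varepsilon=1$: it passes from ``$uv$ anticommutes with $u$ and $v$'' to ``$1,u,v,uv$ span $\HH(\varepsilon)$'' without checking $uv\notin\langle u,v\rangle$, which fails precisely when the plane is degenerate. For $\varepsilon=0$ the paper writes ``multiplying by $-1$, if needed, we may assume $i_1=i+a_1j+a_2k$'', silently assuming the $i$-coefficient is $\pm1$. What rescues the paper is that the lemma is only applied to pairs of \emph{imaginary units} (points of a generalized twistor line). For such pairs the plane is automatically nondegenerate when $\varepsilon=\pm1$ (both vectors satisfy $q(I_m,I_m)=-2$, and the reverse Cauchy--Schwarz inequality for timelike vectors in signature $(2,1)$ forces the Gram determinant to be nonzero once $I_1,I_2$ are independent), and for $\varepsilon=0$ imaginary units have $i$-coefficient exactly $\pm1$, so the paper's direct check goes through. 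If you restate the lemma for imaginary units, your Case~(a) alone handles $\varepsilon=\pm1$ and the paper's explicit computation handles $\varepsilon=0$.
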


\begin{proof}
 Set $k=ij$.
Let $i_1,i_2$ be two linearly independent elements in $\langle i,j,k\rangle \subset \HH(\varepsilon)=\langle i,j\,|i^2=-1,j^2=\varepsilon, ij+ji=0\rangle$.
Let us introduce a bilinear $\HH(\varepsilon)$-valued form $q$ on $\HH(\varepsilon)$, $q(x,y)=xy+yx$. The restriction $Q=q|_{\langle i,j,k\rangle}$ is an $\RR$-valued form.
The elements $x$ and $y$ in $\langle i,j,k\rangle$ anticommute if and only if $x\perp_Q y$.
For $\varepsilon=-1$ and $\varepsilon=1$ the form $Q$ has respective signatures $(0,3)$ and $(2,1)$, so that it is nondegenerate and thus we can construct an 1-dimensional orthogonal complement to the plane 
$\langle i_1,i_2 \rangle$ in $\langle i,j,k \rangle$. If $u,v$ is an orthogonal basis
of $\langle i_1,i_2\rangle$, then $uv$ anticommutes with both $u$ and $v$ and so $uv$ spans the
orthogonal complement $\langle i_1,i_2\rangle^{\perp_Q}$. It is clear now that we have the equality
of vector spaces 
$\HH(\varepsilon)=\langle 1,u,v,uv\rangle=\langle 1, i_1,i_2,i_1i_2\rangle$, which proves the lemma in the cases $\varepsilon=\pm 1$.

In the case of $\HH(0)$ we cannot apply the same argument as $Q=q|_{\langle i,j,k\rangle}$
has now signature $(0,1,2)$, and so we do not apriori know if $uv$, as above, is not in
$\langle i_1,i_2\rangle$ or, what is the same,  $i_1i_2 \notin\langle i_1,i_2\rangle$.
In fact, we can give a direct argument here: multiplying by $-1$, if needed, we may assume that
$i_1=i+a_1j+a_2k,i_2=i+b_1j+b_2k$. Let us denote $i^{\prime}=i_1,
j^{\prime}=i_1-i_2$, then the elements $1,i^{\prime}, j^{\prime}, i^{\prime}j^{\prime}$ 
are easily seen to be linearly independent, so that $\HH(0)=\langle 1,i^{\prime}, j^{\prime}, i^{\prime}j^{\prime}\rangle=\langle 1,i_1,i_2,i_1i_2\rangle$.
\end{proof}

As a consequence of Lemma \ref{Two-generation} we get the following generalization of an analogous result in \cite{Twistor-lines}
stating that a twistor line is uniquely defined by any pair of its  non-proportional points.

\begin{cor}
Let $S_1,S_2\subset Compl \subset End\,V_\RR$ be any two  generalized twistor lines. 
If the intersection  $S_1\cap S_2$ contains points that are linearly independent as vectors in 
$End\,V_\RR$, 
then $S_1=S_2$. In particular, any two distinct  generalized twistor lines $S_1,S_2\subset Compl$
are either disjoint or $S_1\cap S_2$ consists of a pair of antipodal points $\pm I$. 
\end{cor}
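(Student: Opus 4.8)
The plan is to deduce the corollary directly from Lemma~\ref{Two-generation}, together with the elementary observation that a generalized twistor line can be recovered from the unital subalgebra of $End\,V_\RR$ it generates. To set this up, recall that by definition $S$ is the image $\rho(\{q\in\HH(\varepsilon)\,|\,q^2=-1\})$ of the set of imaginary units of $\HH(\varepsilon)$ under some faithful unital representation $\rho\colon\HH(\varepsilon)\to End\,V_\RR$. A one-line computation of the kind already carried out in the excerpt for each of the three algebras shows that every imaginary unit of $\HH(\varepsilon)$ in fact lies in the subspace $\langle i,j,ij\rangle$; and since $\rho$ is injective, an element $a=\rho(q)$ of $\mathcal{A}(S):=\rho(\HH(\varepsilon))$ satisfies $a^2=-Id$ if and only if $q^2=-1$. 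Hence $\mathcal{A}(S)$ is the unital subalgebra of $End\,V_\RR$ generated by $S$, every point of $S$ lies in $\rho(\langle i,j,ij\rangle)$, and
\[
S=\{a\in\mathcal{A}(S)\,|\,a^2=-Id\}.
\]

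Now take $x,y\in S_1\cap S_2$ linearly independent in $End\,V_\RR$. Viewed inside $\mathcal{A}(S_1)\cong\HH(\varepsilon_1)$, the elements $x,y$ both lie in the copy of $\langle i,j,ij\rangle$, so Lemma~\ref{Two-generation} applies and shows that $x$ and $y$ generate $\mathcal{A}(S_1)$ as a real unital algebra; the same argument with $S_2$ shows they generate $\mathcal{A}(S_2)$. But the unital subalgebra of $End\,V_\RR$ generated by the pair $\{x,y\}$ is uniquely determined (it is the intersection of all unital subalgebras containing $x$ and $y$), so $\mathcal{A}(S_1)=\mathcal{A}(S_2)=:\mathcal{A}$. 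Therefore $S_1=\{a\in\mathcal{A}\,|\,a^2=-Id\}=S_2$, which proves the first assertion.

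For the ``in particular'' part, suppose $S_1\neq S_2$. Then $S_1\cap S_2$ contains no two linearly independent elements, so it is contained in a single line $\RR\cdot v\subset End\,V_\RR$. Any two points $\lambda v,\mu v$ of $S_i$ are imaginary units, hence $\lambda^2v^2=-Id=\mu^2v^2$ with $v^2$ invertible, forcing $\mu=\pm\lambda$; thus $S_1\cap S_2\subseteq\{\pm v_0\}$ for a single imaginary unit $v_0$. Finally, each full generalized twistor line $S(I,J)$, $S(I,R)$, $S(I,N)$ is visibly central-symmetric from its explicit description in the excerpt, so $v_0\in S_1\cap S_2$ forces $-v_0\in S_1\cap S_2$. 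Hence either $S_1\cap S_2=\emptyset$, or $S_1\cap S_2=\{\pm v_0\}$ is a pair of antipodal points $\pm I$.

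I do not anticipate a real obstacle here: the only point needing a little care is checking that Lemma~\ref{Two-generation} is applicable, i.e.\ that the common points $x,y$ lie in the subspace $\langle i,j,ij\rangle$ rather than merely somewhere in $\HH(\varepsilon_i)$ --- which is precisely the remark, already implicit in the excerpt's case-by-case computations, that all imaginary units of $\HH(\varepsilon)$ sit in $\langle i,j,ij\rangle$.
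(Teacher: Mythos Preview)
Your argument is correct and follows essentially the same route as the paper's own proof: apply Lemma~\ref{Two-generation} to the two linearly independent common points (after noting they lie in $\langle i,j,ij\rangle$) to conclude that the ambient subalgebras $\HH(\varepsilon_1),\HH(\varepsilon_2)\subset End\,V_\RR$ coincide, whence $S_1=S_2$; then use central symmetry for the ``in particular'' clause. Your write-up is in fact slightly more explicit than the paper's in two places --- spelling out that $S=\{a\in\mathcal A(S)\mid a^2=-Id\}$ so that equal algebras force equal lines, and giving the $\lambda^2v^2=\mu^2v^2$ argument for why at most one antipodal pair can occur --- but the underlying idea is identical.
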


Indeed, let $S_1 \subset \HH(\varepsilon_1) \subset End\,V_\RR,S_2 \subset \HH(\varepsilon_2) \subset End\,V_\RR$ be the twistor lines of the respective types. Then the imaginary units in $S_1\cap S_2$ must be contained in the respective
subspaces $\langle i,j,k\rangle$ of each of the two algebras, so by Lemma \ref{Two-generation} they generate each of $\HH(\varepsilon_1),\HH(\varepsilon_2)$. Hence the latter
subalgebras coincide, therefore $S_1=S_2$. 

Next, as the 
intersection of two distinct generalized twistor lines $S_1$, $S_2$ may only consist
of complex structures that are all proportional to each other, and every 
generalized twistor line in $Compl$ is invariant under the antipodal map $I \mapsto -I$,
we get the rest of the statement of the corollary. 

Another direct consequence of Lemma \ref{Two-generation} is the following corollary. 
\begin{cor}
\label{Corollary-centralizer}
Let $I_1,I_2$ be any two linearly independent imaginary units in
$\HH(\varepsilon) \subset End\,V_\RR$, where $\varepsilon$ is any of $-1,0,1$.
Then $G_{I_1}\cap G_{I_2}=G_{\HH(\varepsilon)}$.
\end{cor}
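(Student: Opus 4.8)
The plan is to deduce Corollary~\ref{Corollary-centralizer} directly from Lemma~\ref{Two-generation}, using only the elementary fact that the adjoint-action stabilizer $G_T$ of an operator $T\in\mathrm{End}\,V_\RR$ consists precisely of those $g\in GL(V_\RR)$ that commute with $T$, i.e.\ $gT=Tg$, and that this centralizer condition is $\RR$-linear in $T$ and multiplicative: if $g$ commutes with $T_1$ and $T_2$ then it commutes with every element of the subalgebra they generate.

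\medskip

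\textbf{Step 1.} Suppose $g\in G_{I_1}\cap G_{I_2}$. Then $g$ commutes with both $I_1$ and $I_2$. Since $I_1,I_2$ are linearly independent elements of the subspace $\langle i,j,ij\rangle\subset\HH(\varepsilon)$ (here identified with its image in $\mathrm{End}\,V_\RR$), Lemma~\ref{Two-generation} tells us that $I_1$ and $I_2$ generate all of $\HH(\varepsilon)$ as a real algebra. Because the set of elements of $\mathrm{End}\,V_\RR$ commuting with a fixed $g\in GL(V_\RR)$ is an $\RR$-subalgebra of $\mathrm{End}\,V_\RR$ (it is closed under addition, scalar multiplication, and composition, and contains $Id$), and it contains $I_1,I_2$, it must contain the whole subalgebra $\HH(\varepsilon)$ they generate. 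Hence $g$ commutes pointwise with $\HH(\varepsilon)$, that is, $g\in G_{\HH(\varepsilon)}$. This gives the inclusion $G_{I_1}\cap G_{I_2}\subseteq G_{\HH(\varepsilon)}$.

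\medskip

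\textbf{Step 2.} The reverse inclusion is immediate: if $g$ centralizes every element of $\HH(\varepsilon)$, then in particular it centralizes $I_1$ and $I_2$, so $g\in G_{I_1}\cap G_{I_2}$. Combining the two inclusions yields $G_{I_1}\cap G_{I_2}=G_{\HH(\varepsilon)}$, as claimed.

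\medskip

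There is essentially no obstacle here; the only point requiring care is the bookkeeping identification of the abstract algebra $\HH(\varepsilon)$ with the concrete subalgebra of $\mathrm{End}\,V_\RR$ generated by the images of $i$ and $j$, so that ``generated as a real algebra'' in Lemma~\ref{Two-generation} transfers to ``generated inside $\mathrm{End}\,V_\RR$'', which is exactly what is needed to invoke the subalgebra-closure property of a centralizer. All the rest is the trivial observation that commuting with a set of generators is the same as commuting with the subalgebra they generate.
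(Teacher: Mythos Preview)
Your proof is correct and is exactly the argument the paper has in mind: the paper simply states the corollary as ``another direct consequence of Lemma~\ref{Two-generation}'' without writing out the details, and your Steps~1 and~2 supply precisely the intended centralizer-of-generators reasoning.
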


Let $I,J \in End\,V_\RR$ satisfy  $I^2=-Id,J^2=\varepsilon Id$ for $\varepsilon$ equal 
$1$ or $-1$, $IJ+JI=0$. Set $K=IJ$. 
One easily calculates  that $\dim_\RR\,G_{I}=\dim_\RR\, G_J=\dim_\RR\,G_K=8n^2$ 
and $\dim_\RR\, G_{\HH(\varepsilon)}=4n^2$, where $\HH(\varepsilon)\subset End\,V_\RR$
is the subalgebra generated by $I$ and $J$. 


\begin{lem}
\label{Transversality-lemma}
Let $\varepsilon$ be any of $1,-1$. We have the direct sum decomposition
$T_eG/T_eG_{\HH(\varepsilon)}=T_eG_I/T_eG_{\HH(\varepsilon)} \oplus T_eG_J/T_eG_{\HH(\varepsilon)} \oplus T_eG_{K}/T_eG_{\HH(\varepsilon)}$.
%
\end{lem}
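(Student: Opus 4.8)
The plan is to work at the Lie-algebra level, identifying $T_eG=\mathfrak{gl}(V_\RR)=\End V_\RR$ and $T_eG_T$, for any $T\in\End V_\RR$, with the centralizer $\mathfrak{z}(T)=\{X\in\End V_\RR\mid XT=TX\}$. Under this identification $T_eG_{\HH(\varepsilon)}$ becomes $\mathfrak{z}(I)\cap\mathfrak{z}(J)\cap\mathfrak{z}(K)$, which by Corollary \ref{Corollary-centralizer} (applied to the Lie algebras, or directly) equals the simultaneous centralizer, i.e. the commutant $\HH(\varepsilon)'$ of the subalgebra $\HH(\varepsilon)\subset\End V_\RR$. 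So the asserted direct-sum decomposition is the purely linear-algebraic statement
\[
\End V_\RR \;=\; \mathfrak{z}(I)+\mathfrak{z}(J)+\mathfrak{z}(K)
\]
together with the claim that, after quotienting each summand by the common intersection $\mathfrak{z}(I)\cap\mathfrak{z}(J)\cap\mathfrak{z}(K)$, the sum becomes direct. Equivalently, using the dimension counts already quoted in the excerpt ($\dim\mathfrak{z}(I)=\dim\mathfrak{z}(J)=\dim\mathfrak{z}(K)=8n^2$, $\dim(\mathfrak{z}(I)\cap\mathfrak{z}(J)\cap\mathfrak{z}(K))=4n^2$, $\dim\End V_\RR=16n^2$), the three quotients have dimension $4n^2$ each, so their (automatically direct, once we show it) sum has dimension $12n^2$ — but that is \emph{not} all of $\End V_\RR/T_eG_{\HH(\varepsilon)}$, whose dimension is $12n^2$. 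Good: $16n^2-4n^2=12n^2=3\cdot 4n^2$, so a dimension count will close the argument once we prove the sum is direct.

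First I would reduce to a normal form. By Theorem \ref{Kahler-theorem} (transitivity of $GL(V_\RR)$ on lines of type $\HH(\varepsilon)$ for $\varepsilon=\pm1$), or simply because all faithful representations of $\HH(\varepsilon)$ are equivalent, it suffices to prove the decomposition for one convenient model, and by taking a direct sum of irreducibles it suffices to check it for the irreducible representation: $4$-dimensional for $\HH(-1)$ (so $V_\RR=\HH$ with $I,J,K$ acting by left multiplication, $\End V_\RR=M_4(\RR)$, $n$ replaced by $1/2$... here one works with $\HH\otimes M_m(\RR)$ — more cleanly, write $\End V_\RR\cong \HH(\varepsilon)\otimes_{\RR}\HH(\varepsilon)'$ by the double-centralizer theorem, since $\HH(-1)$ is simple and $\HH(1)\cong M_2(\RR)$ is simple too). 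Using $\End V_\RR\cong \HH(\varepsilon)\otimes C$ where $C=\HH(\varepsilon)'$ is the commutant, the centralizer $\mathfrak{z}(I)$ is $\mathfrak{z}_{\HH(\varepsilon)}(i)\otimes C$, and similarly for $J,K$; so the whole statement factors through the four-dimensional algebra $\HH(\varepsilon)$ itself. Thus the problem collapses to: \emph{show that $\HH(\varepsilon)=\mathfrak{z}_{\HH(\varepsilon)}(i)+\mathfrak{z}_{\HH(\varepsilon)}(j)+\mathfrak{z}_{\HH(\varepsilon)}(k)$ with the three $2$-dimensional subalgebras pairwise meeting only in $\RR\cdot 1$, and that the induced map to $\HH(\varepsilon)/\RR$ is a direct sum on each $1$-dimensional quotient.} For $\HH(-1)=\HH$ this is classical: $\mathfrak{z}(i)=\RR\oplus\RR i$, etc., and $\RR i\oplus\RR j\oplus\RR k=\HH/\RR$. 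For $\HH(1)\cong M_2(\RR)$ one computes the three centralizers explicitly in the basis $1,i,j,k$ with $j^2=1$ and checks the same.

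The main obstacle — and the step I would spend the most care on — is the $\varepsilon=1$ case, because there the invariant bilinear form $Q(x,y)=xy+yx$ on $\langle i,j,k\rangle$ has indefinite signature $(2,1)$ rather than being negative definite, so one cannot reason "$x\perp y\iff x,y$ anticommute, and orthogonal complements are nondegenerate $2$-planes" quite as cleanly, and one must be sure that $\mathfrak{z}(i)\cap\mathfrak{z}(j)$ really is just $\RR\cdot 1$ and not something larger caused by null vectors. (Concretely: $\mathfrak{z}_{\HH(1)}(i)=\RR\oplus\RR i$, $\mathfrak{z}_{\HH(1)}(j)=\RR\oplus\RR j$, $\mathfrak{z}_{\HH(1)}(k)=\RR\oplus\RR k$, and $i,j,k$ are linearly independent modulo $\RR$, so the intersections are exactly $\RR$ and the sum of quotients is direct of dimension $3$.) Once this is verified for the single algebra $\HH(1)$, tensoring with the commutant $C$ and matching dimensions with the quoted values $8n^2$, $4n^2$, $16n^2$ finishes the proof; I would present that dimension bookkeeping as the last line.
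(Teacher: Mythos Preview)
Your argument is correct and takes a genuinely different route from the paper's. The paper argues directly inside $\End V_\RR$: given $u\in T_eG_I$, $v\in T_eG_J$, $w\in T_eG_K$ with $u+v+w\equiv 0$ modulo $T_eG_{\HH(\varepsilon)}$, it applies the bracket $[\cdot,I]$ to kill $u$, observes that $[v,I]+[w,I]$ both commutes and anticommutes with $I$ hence vanishes, and then uses anticommutation relations together with the invertibility of $J$ (so that $I=-\varepsilon JK$) to force $[v,I]=[w,I]=0$, whence $v,w,u\in T_eG_{\HH(\varepsilon)}$. Your approach instead invokes the double-centralizer theorem to write $\End V_\RR\cong \HH(\varepsilon)\otimes_\RR C$ (valid because $\HH(\pm1)$ is central simple over $\RR$), under which each centralizer becomes $\mathfrak z_{\HH(\varepsilon)}(i)\otimes C$, etc., reducing the whole question to the four-dimensional algebra, where it is an immediate check that $\RR i\oplus\RR j\oplus\RR k=\HH(\varepsilon)/\RR$. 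The paper's proof is self-contained and makes transparent exactly where the invertibility of $J$ enters (hence why $\HH(0)$ is excluded); your proof is more structural and explains the same exclusion differently ($\HH(0)$ is not simple, so the tensor decomposition fails), at the cost of importing the double-centralizer theorem. One small note: your first attempted reduction ``by taking a direct sum of irreducibles'' does not work as stated, since $\End(V_1\oplus V_2)$ has off-diagonal blocks; you were right to abandon it for the tensor-product description, which handles those blocks automatically.
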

\begin{proof}
First of all, the dimension of each of the three quotient spaces in the direct sum is equal $8n^2-4n^2=4n^2$, so that the dimensions sum up to $12n^2=\dim_\RR T_eG/T_eG_{\HH(\varepsilon)}$.

Let us show that the sum of the subspaces is indeed direct.
Let $u\in T_eG_I,v\in T_eG_J, w\in T_eG_{K}$ and
suppose $u+v+w=0\,(mod\,T_eG_{\HH(\varepsilon)})$. Then applying the Lie bracket $[\cdot,I]$
to this equality and using that $uI=Iu$ we get $[v,I]+[w,I]\in T_eG_{\HH(\varepsilon)}$. 
Thus the sum $[v,I]+[w,I]$ must commute with $I$. By construction it also anticommutes with $I$,
so that indeed $[v,I]+[w,I]=0\in T_eG$.
Next, 
$[v,I]$ anticommutes with $I$ and, as $v$ commutes with $J$, $[v,I]$ anticommutes with $J$,
so that finally $[v,I]$ commutes with $K=IJ$. 

In a similar way we show that $[w,I]$ commutes
with $J$, so that $[v,I]=-[w,I]$ commutes with both $K$ and $J$.  
Now if $J^2 =\pm Id$, that is $J$ is an invertible element of our algebra, we have that
$[v,I]$ and $[w,I]$ commute with $I=-\varepsilon JK$, so that $[v,I]=[w,I]=0\in T_eG$
and $v\in T_eG_I\cap T_eG_J=T_eG_{\HH(\varepsilon)}$, $w \in T_eG_K \cap T_eG_I=T_eG_{\HH(\varepsilon)}$.
 Then $u\in T_eG_{\HH(\varepsilon)}$ as well. This proves that we have the 
stated direct sum decomposition.
\end{proof}

\begin{prop}
\label{Proposition-general-transversality}
Let $I_1,I_2,I_3$ be complex structures belonging to the same twistor sphere $S$
of type $\HH(\varepsilon)$, where $\varepsilon=\pm 1$.
The submanifolds $G_{I_1}/G_{\HH(\varepsilon)}, G_{I_2}/G_{\HH(\varepsilon)}, G_{I_3}/G_{\HH(\varepsilon)}$ in $G/G_{\HH(\varepsilon)}$
intersect transversally (as a triple) at $eG_{\HH(\varepsilon)}$ if and only if $I_1,I_2,I_3$ are linearly independent as vectors in 
$End\, V_\RR$.
\end{prop}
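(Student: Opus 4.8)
The plan is to reduce the transversality of the triple $\{G_{I_j}/G_{\HH(\varepsilon)}\}_{j=1,2,3}$ at the base point to a statement purely about the tangent spaces $T_eG_{I_j}$ inside $T_eG=\mathfrak{gl}(V_\RR)$, and then to play the two directions of the ``if and only if'' off the already-established Lemma \ref{Transversality-lemma}. First I would record that a triple of submanifolds through a common point $p$ of a manifold $X$ is transversal (as a triple) exactly when the sum of the three tangent spaces equals $T_pX$ \emph{and}, more precisely in the codimension-counting sense, when the natural map $\bigoplus_j T_pX/T_pY_j \to \bigoplus_j T_pX/T_pY_j$ is such that the diagonal-type condition holds; concretely here, since each $G_{I_j}/G_{\HH(\varepsilon)}$ has codimension $8n^2$ inside the $12n^2$-dimensional $G/G_{\HH(\varepsilon)}$, the triple is transversal at $eG_{\HH(\varepsilon)}$ if and only if
$$
T_eG/T_eG_{\HH(\varepsilon)} = T_eG_{I_1}/T_eG_{\HH(\varepsilon)} + T_eG_{I_2}/T_eG_{\HH(\varepsilon)} + T_eG_{I_3}/T_eG_{\HH(\varepsilon)}
$$
with the three summands meeting only in the common zero, i.e. the sum is direct. (The dimension bookkeeping $3\cdot 4n^2 = 12n^2$ means ``spanning'' and ``direct'' are equivalent here.) So the proposition becomes: the three subspaces $T_eG_{I_j}/T_eG_{\HH(\varepsilon)}$ are in direct sum if and only if $I_1,I_2,I_3$ are linearly independent in $\operatorname{End}V_\RR$.

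For the ``if'' direction, suppose $I_1,I_2,I_3$ are linearly independent. Since they all lie on the twistor sphere $S$ of type $\HH(\varepsilon)$, by Lemma \ref{Two-generation} any two of them already generate $\HH(\varepsilon)$, and the three of them span the $3$-dimensional space $\langle I,J,K\rangle$; so $\{I_1,I_2,I_3\}$ is a basis of that space, related to the standard basis $\{I,J,K\}$ (where $K=IJ$) by an element of $GL(3,\RR)$. Now $G_{I_j} = G_{I_j}$ depends only on the line $\RR I_j$, but the triple structure does not: the key point is that conjugation by an element $g\in G$ carrying $(I,J,K)$ to $(I_1,I_2,I_3)$ — such $g$ exists because all faithful representations of $\HH(\varepsilon)$ are equivalent by Theorem \ref{Kahler-theorem} — is an automorphism of $G/G_{\HH(\varepsilon)}$ (it normalizes $G_{\HH(\varepsilon)}$ since it permutes the generating complex structures) sending the standard triple $(G_I,G_J,G_K)$ to $(G_{I_1},G_{I_2},G_{I_3})$. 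Hence transversality of the latter triple at $eG_{\HH(\varepsilon)}$ is equivalent to transversality of the former, which is exactly the content of Lemma \ref{Transversality-lemma}. I would spell out this equivariance carefully: the differential of the conjugation automorphism carries $T_eG_I/T_eG_{\HH(\varepsilon)}$ isomorphically onto $T_eG_{I_1}/T_eG_{\HH(\varepsilon)}$, etc., so the direct-sum decomposition of Lemma \ref{Transversality-lemma} transports to the $I_j$.

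For the ``only if'' direction, suppose $I_1,I_2,I_3$ are linearly dependent on $S$. Because distinct points of $S$ span at most a $2$-plane in this case, and any two linearly independent complex structures among them still generate $\HH(\varepsilon)$ and hence determine the \emph{same} plane $\langle I,J,K\rangle$ containing all three, we may as well assume (after the $GL(V_\RR)$-normalization as above, and possibly relabeling) that $I_1 = I$, $I_2 = J$, and $I_3 = \alpha I + \beta J + \gamma K$ lies in the span of $I_1,I_2$ — wait, if all three are coplanar but $S$ is a hyperboloid/sphere, linear dependence of the three vectors forces $I_3 \in \langle I_1,I_2\rangle$. Then $G_{I_3} \supset G_{I_1}\cap G_{I_2} = G_{\HH(\varepsilon)}$ and moreover any $g$ commuting with both $I_1$ and $I_2$ commutes with $I_3$; but the relevant obstruction is cleaner at the Lie-algebra level: $T_eG_{I_3}/T_eG_{\HH(\varepsilon)}$ is contained in $T_eG_{I_1}/T_eG_{\HH(\varepsilon)} + T_eG_{I_2}/T_eG_{\HH(\varepsilon)}$ — indeed if $x$ commutes with $I_3 = aI_1+bI_2$ then... this is not automatic, so instead I would argue via codimension: the three codimension-$8n^2$ subspaces in the $12n^2$-dimensional ambient space are transversal as a triple only if their sum is everything, i.e. the sum of the three quotients is direct; but $G_{I_1}\cap G_{I_2}\cap G_{I_3} = G_{I_1}\cap G_{I_2} = G_{\HH(\varepsilon)}$ by Corollary \ref{Corollary-centralizer} gives no contradiction by itself, so the real input is that when $I_3\in\langle I_1,I_2\rangle$ the pairwise-nondirectness already present in $\HH(\varepsilon)$ (the failure analysed in Lemma \ref{Transversality-lemma}'s proof, which used the invertibility of $J$ — here $I_3$ being a \emph{degenerate} combination, a ``null'' direction in the quadratic form $Q$ of Lemma \ref{Two-generation}) forces a nonzero element in the triple intersection of the quotients; I would produce such an element explicitly by taking, in the matrix model of the case-of-$\HH(\varepsilon)$ computations in Section 3, a suitable $x\in T_eG$ commuting with $I_1$ and with $I_2$ simultaneously but not lying in $T_eG_{\HH(\varepsilon)}$ once one drops one of the three constraints. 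The main obstacle is precisely this ``only if'' direction: making the counting argument airtight, i.e. exhibiting a concrete vector witnessing the failure of the direct sum when $I_3$ is a linear combination of $I_1,I_2$, rather than merely invoking dimensions. I expect this amounts to observing that $T_eG_{I_3}$ then contains $T_eG_{I_1}\cap T_eG_{I_2}$ strictly... and in fact equals it plus extra, so that the sum of the three quotients loses dimension; carrying this out will require choosing coordinates as in Section 3 and a short direct computation, which I would defer to the write-up.
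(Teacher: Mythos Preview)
Your ``if'' direction has a genuine error. You claim that because all faithful representations of $\HH(\varepsilon)$ are equivalent, there exists $g\in G$ with $(gIg^{-1},gJg^{-1},gKg^{-1})=(I_1,I_2,I_3)$. This is false. Conjugation by $g$ is an algebra automorphism of $\HH(\varepsilon)$, so it must preserve the anticommutator form $(u,v)=-\tfrac{1}{4n}\mathrm{Tr}(uv)$ on $\langle I,J,K\rangle$; in particular it takes an orthonormal (quaternionic) triple to another orthonormal triple. But an arbitrary linearly independent triple $I_1,I_2,I_3$ on $S$ need not be orthonormal: for instance, on a compact line $S(I,J)$ take $I_1=I$, $I_2=\tfrac{1}{\sqrt{2}}(I+J)$, $I_3=J$; then $I_1I_2+I_2I_1=-\sqrt{2}\,Id\neq 0$, so no conjugation carries $(I,J,K)$ to $(I_1,I_2,I_3)$. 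The automorphism group of $\HH(\varepsilon)$ acting on $\langle I,J,K\rangle$ is only $SO(3)$ (for $\varepsilon=-1$) or a form of $O(1,2)$ (for $\varepsilon=1$), not $GL(3,\RR)$, so your $GL(3,\RR)$ change of basis cannot be realized by any $g\in G$.

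The paper's proof does not try to reduce to the standard triple by symmetry. Instead it uses the decomposition of Lemma~\ref{Transversality-lemma}, $T_eG/T_eG_{\HH(\varepsilon)}=V_I\oplus V_J\oplus V_K$, as a fixed coordinate system. Writing $I_j=a_jI+b_jJ+c_jK$ and $X\in V_1$ as $X=X_I+X_J+X_K$, the relation $[X,I_1]=0$ is expanded and its three components (with respect to $V_I\oplus V_J\oplus V_K$) give linear relations among $X_I,X_J,X_K$; via the isomorphisms $F_I=[\cdot,I],F_J=[\cdot,J],F_K=[\cdot,K]$ between the summands, one expresses $X_J,X_K$ in terms of $X_I$ (assuming $a_1\neq 0$, which one can arrange). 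Doing the same for $Y\in V_2$ and $Z\in V_3$ and writing $X+Y+Z=0$ componentwise yields a $3\times 3$ linear system in $(X_I,Y_I,Z_I)$ whose coefficient matrix has columns proportional to $(1,-b_j/a_j,-c_j/a_j)$. This system has a nontrivial solution if and only if that matrix is singular, i.e.\ if and only if $I_1,I_2,I_3$ are linearly dependent. This single computation handles both directions of the equivalence at once, which also resolves the ``only if'' direction you left open.
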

\begin{proof}
 Let $S=S(I,J)$, where $I^2=-Id,J^2=\varepsilon Id, IJ=-JI$. 
The proof essentially uses the transversality proved in Lemma \ref{Transversality-lemma}, that is, 
the fact that
\begin{equation}
\label{I-R-transversality}
T_eG/T_eG_{\HH(\varepsilon)}=V_I \oplus V_J \oplus  V_{K},
\end{equation}
where we set  $V_I=T_eG_{I}/T_eG_{\HH(\varepsilon)}, V_J=T_eG_{J}/T_eG_{\HH(\varepsilon)}, V_{K}=T_eG_{K}/T_eG_{\HH(\varepsilon)}$, $K=IJ$.
%
We set also $V_j=T_eG_{I_j}/T_eG_{\HH(\varepsilon)}, j=1,2,3$.  
In order to prove the proposition we need to show that $T_eG/T_eG_{\HH(\varepsilon)}=V_1\oplus V_2\oplus V_3$.

We clearly have that $\dim_\RR\,V_j=\dim_\RR\,T_eG_{I_j}-\dim_\RR\,T_eG_{\HH(\varepsilon)}=4n^2, j=1,2,3,$ so that we have the (formal) equality of the dimensions 
 $$\dim_\RR\,T_eG/T_eG_{\HH(\varepsilon)}=\dim_\RR\,V_1+\dim_\RR\,V_2+\dim_\RR\,V_3.$$
Let us now show that the sum $V_1+V_2+V_3$ is, in fact, direct. 

 Imaginary units in $\HH(\varepsilon)$
are all contained in the subspace $\langle I,J,K\rangle\subset \HH(\varepsilon)$, so we can express
 $I_j=a_jI+b_jJ+c_jK, a_j,b_j,c_j \in \RR, j=1,2,3$. 
Suppose that for certain vectors 
$X \in V_1, Y \in V_2$ and $Z \in V_3$
we have $X+Y+Z=0$. Let us decompose the vector $X$ into the sum of its components 
in the respective subspaces of the decomposition (\ref{I-R-transversality}), $X=X_I+X_J+X_{K}$,
and do similarly for $Y$ and $Z$.  Then for $X$ the commutation relation $[X,I_1]=0$
can be written as 
$$a_1[X_J+X_{K},I]+b_1[X_I+X_{K},J]+c_1[X_I+X_J,K]=0.$$
Noting that in the above expression, for example, the term $[X_J,I]$ anticommutes with both $I,J$, hence commutes with $K=IJ$,
and an analogous commutation holds for other terms as well (here it is important that $J$ and $K$ are invertible elements of our $\HH(\varepsilon)$, so that $I=-\varepsilon JK$), we can decompose the expression on the left side of the above equality 
with respect to (\ref{I-R-transversality}),
$$(b_1[X_{K},J]+c_1[X_J,K])+(a_1[X_{K},I]+c_1[X_I,K])+(a_1[X_J,I]+b_1[X_I,J])=0.$$

By Lemma \ref{Transversality-lemma} we conclude that $b_1[X_{K},J]+c_1[X_J,K]=0, a_1[X_{K},I]+c_1[X_I,K]=0$
and $a_1[X_J,I]+b_1[X_I,J]=0$. Assume, that $a_1 \neq 0$. Then we can use
the last two equalities to express 
\begin{equation}
\label{X-I-reduction}
[X_J,I]=-\frac{b_1}{a_1}[X_I,J], [X_{K},I]=-\frac{c_1}{a_1}[X_I,K].
\end{equation}
Note  that Lemma  \ref{Two-generation} implies, that the linear mappings $[\cdot,J] \colon V_I \rightarrow V_{K}, [\cdot, K] \colon V_I \rightarrow V_J$
and $[\cdot, I] \colon V_J \rightarrow V_{K}$  
 are isomorphisms of the respective vector spaces. 
Let us denote these isomorphisms by $F_J,F_{K},F_I$ respectively. Then equalities (\ref{X-I-reduction})
allow us to write 
$$X_J=-\frac{b_1}{a_1}F^{-1}_I \circ F_J(X_I), X_{K}= -\frac{c_1}{a_1}F^{-1}_I \circ F_{K}(X_I),$$
so that 
$$X=X_I +\left(-\frac{b_1}{a_1}F^{-1}_I \circ F_J(X_I)\right) +  \left(-\frac{c_1}{a_1}F^{-1}_I \circ F_{K}(X_I)\right).$$
Assuming that $a_2,a_3 \neq 0$ we can get similar expressions for $Y$ and $Z$. Actually, for 
the case $\varepsilon =1$ we  have that, in order for $I_1,I_2,I_3$
to be imaginary units, we automatically have $a_j\neq 0, j=1,2,3$, and
for the case $\varepsilon=-1$ choosing
the quaternionic triple $I,J,K$  appropriately, we may assume that all $a_j, j=1,2,3,$ are nonzero.
Using the above representation of $V_J$- and $V_{K}$-components of $X$ in terms of $X_I$, similarly
for $Y$ and $Z$, we can write the equality $X+Y+Z=0$ component-wise,
 with respect to (\ref{I-R-transversality}), 
\[
\left(\begin{array}{ccc}
1 & 1 & 1\\
-\frac{b_1}{a_1} & -\frac{b_2}{a_2} & -\frac{b_3}{a_3}\\
-\frac{c_1}{a_1} & -\frac{c_2}{a_2} & -\frac{c_3}{a_3}
\end{array}\right)
\left(\begin{array}{c}
X_I \\
Y_I\\
Z_I 
\end{array}\right)=
\left(\begin{array}{c}
0 \\
0\\
0 
\end{array}\right).
\]
This equation has a nontrivial solution $(X_I,Y_I,Z_I)$ if and only if the columns of the matrix, thus $I_1,I_2,I_3$, are linearly dependent.
\end{proof}

\begin{proof}[Proof of Theorem \ref{Connectivity-theorem}]
Let $I_1,I_2,I_3$ be complex structures, linearly independent as elements
 in $End\,V_\RR$,  belonging  to the same connected component of 
a line $S(I,J)$, where $I^2=-Id,J^2=\varepsilon Id, IJ=-JI$. Then 
by Corollary \ref{Corollary-centralizer} we get that
 $G_{I_1} \cap G_{I_2}=G_I\cap G_J=G_{\HH(\varepsilon)}$.

As in \cite{Twistor-lines}, we define the mapping $$\Phi \colon G_{I_1}\times G_{I_2} \rightarrow Compl,$$
$$(g_1,g_2)\mapsto g_1g_2I_3g_2^{-1}g_1^{-1} \in Compl.$$ Let us first show that near the
identity  element $(e,e)\in G_{I_1} \times G_{I_2}$ the mapping $\Phi$ is a submersion onto a neighborhood of $I_3$ in $Compl$.
The differential $d_{(e,e)}\Phi\colon G_{I_1} \times G_{I_2}\rightarrow Compl$
factors through the quotient map 
$$T_eG_{I_1}\oplus T_eG_{I_2} \rightarrow T_eG_{I_1}/T_eG_{\HH(\varepsilon)}\oplus T_eG_{I_2}/T_eG_{\HH(\varepsilon)},$$
denote the resulting map by $$\widetilde{d_{(e,e)}\Phi} \colon T_eG_{I_1}/T_eG_{\HH(\varepsilon)}\oplus T_eG_{I_2}/T_eG_{\HH(\varepsilon)} \rightarrow T_{I_3}Compl.$$ The dimensions of the domain and the target space of $\widetilde{d_{(e,e)}\Phi}$ are equal, so
in order to show that $\Phi$ is a submersion near $(e,e) \in G_{I_1} \times G_{I_2}$
onto a neighborhood of $I_3$ in $Compl$ it is enough to show that $\widetilde{d_{(e,e)}\Phi}$ is injective.
For $X\in  T_eG_{I_1}/T_eG_{\HH(\varepsilon)}$ and $Y\in  T_eG_{I_2}/T_eG_{\HH(\varepsilon)}$ we have 
$$\widetilde{d_{(e,e)}\Phi}(X+Y)=\left.\frac{d}{dt}\right|_{t=0}e^{tX}e^{tY}I_3e^{-tY}e^{-tX}
=(X+Y)I_3-I_3(X+Y)=$$ $$=[X+Y,I_3]\in T_{I_3} Compl.$$
If we assume $[X+Y,I_3]=0$ then $X+Y\in T_eG_{I_3}/T_eG_{\HH(\varepsilon)}$ and so by
Proposition \ref{Proposition-general-transversality} we have that $X=0$ and $Y=0$. This shows the
required injectivity of $\widetilde{d_{(e,e)}\Phi}$.

We have now that $\Phi(G_{I_1}\times G_{I_2})$ contains an open neighborhood of $I_3$ in $Compl$. 
Locally around $I_3\in Compl$ for every point $I$  we can write
$I=g_1g_2I_3g_2^{-1}g_1^{-1}={}^{g_1g_2} I_3$ for $g_1\in G_{I_1},g_2\in G_{I_2}$. 
Then the  complex structures ${}^{g_1g_2}I_2={}^{g_1} I_2,{}^{g_1g_2} I_3$ span the line 
${}^{g_1g_2} S(I_2,I_3)=S({}^{g_1} I_2,{}^{g_1g_2}  I_3)$, 
the complex structures ${}^{g_1}I_1=I_1,{}^{g_1} I_2$ span ${}^{g_1} S(I_1,I_2)=S(I_1,{}^{g_1} I_2)$.
Now the consecutive lines $S=S(I_1,I_3)=S(I_1,I_2),{}^{g_1} S=S(I_1,{}^{g_1} I_2)$ and 
${}^{g_1g_2} S=
S({}^{g_1} I_2,{}^{g_1g_2} I_3)$ form a connected path of three lines joining $I_3$ to $I={}^{g_1g_2} I_3$.
Finally, passing to the global picture, we conclude that
 any two points can be joined by 
a path of connected components of generalized twistor lines (possibly, involving more than three 
such lines).
\end{proof}

\end{document}